\documentclass[11pt,english]{amsart}

\usepackage{amsmath,leftidx,amsthm}
\usepackage[all]{xy}
\usepackage{xspace}
\usepackage[psamsfonts]{amssymb}
\usepackage[latin1]{inputenc}
\usepackage{graphicx,color}
\usepackage{hyperref,fancyhdr,dutchcal,appendix}
\usepackage{xcolor}

\newtheorem*{remark}{\bf Remark}

\newtheorem{theorem}{\bf Theorem}
\newtheorem{proposition}[theorem]{\bf Proposition}
\newtheorem{definition}{\bf Definition}
\newtheorem*{mainTheorem}{\bf Main Theorem}

\newtheorem{lemma}[theorem]{\bf Lemma}
\newtheorem{corollary}[theorem]{\bf Corollary}

\newtheorem*{Corollary}{\bf Corollary}

\def\C{{\mathbb C}}

\def\R{{\mathbb R}}
\def\Z{{\mathbb Z}}

\def\p{\mathbb{P}}

\topmargin 0cm
\oddsidemargin 0.5cm
\evensidemargin 0.5cm
\textheight 22cm
\textwidth 15cm

\title[Geometric Dynamical Northcott for Plane automorphisms]{The Geometric Dynamical Northcott Property For Regular Polynomial Automorphisms of the Affine Plane}
\author{Thomas Gauthier}
\address{CMLS, Ecole Polytechnique, Institut Polytechnique de Paris, 91128 Palaiseau Cedex, France}
\email{thomas.gauthier@polytechnique.edu}
\author{Gabriel Vigny}
\address{LAMFA, Universit\'e de Picardie Jules Verne, 33 rue Saint-Leu, 80039 AMIENS Cedex 1, FRANCE}
\email{gabriel.vigny@u-picardie.fr}
\thanks{Both authors are partially supported by the ANR grant Fatou ANR-17-CE40-0002-01.}
\thanks{Keywords: regular plane polynomial automorphisms, canonical height, algebraic family of rational maps, arithmetic characterizations of stability.}
\thanks{Mathematics~Subject~Classification~(2010):  	37P15, 37P30, 37F45.}

\begin{document}
\begin{abstract}
We establish the finiteness of periodic points, that we called \emph{Geometric Dynamical Northcott Property}, for regular polynomials automorphisms of the affine plane over a function field $\mathbf{K}$ of characteristic zero, improving results of Ingram. 
  
  For that, we show that when $\mathbf{K}$ is the field of rational functions of a smooth complex projective curve, the canonical height of a subvariety is the mass of an appropriate bifurcation current and that a marked point is stable if and only if its canonical height is zero. We then establish the Geometric Dynamical Northcott Property using a similarity argument. 
    \end{abstract}

\maketitle

\section{Introduction}
 Let $k$ be a field of characteristic zero, $\mathcal{B}$ a normal projective $k$-variety, and let $\mathbf{K}:=k(\mathcal{B})$ be its field of rational functions. A regular plane automorphism over the function field $\mathbf{K}$ is a polynomial automorphism of the affine plane $\mathbb{A}^2(\mathbf{K})$ such that the unique indeterminacy point $I^+$ of its extension to $\p^2(\mathbf{K})$ is distinct to the unique indeterminacy point $I^-$ of the extension of $f^{-1}$ to $\p^2(\mathbf{K})$.

Let $h:\p^2(\bar{\mathbf{K}})\to\R_+$ be the standard height function on $\p^2(\bar{\mathbf{K}})$, i.e. the height function $h=h_{\p^2,L}$ associated with the ample linebundle $L:=\mathcal{O}_{\p^2}(1)$. Following  Kawaguchi~\cite{Kawaguchi-Henon} in the number field case, one can define three different canonical heights for $f$:
\[\widehat{h}_f^+:=\lim_{n\to+\infty}d^{-n}h\circ f^n,  \quad \widehat{h}_f^-:=\lim_{n\to+\infty}d^{-n}h\circ f^{-n} \quad \text{and} \quad \widehat{h}_f:=\widehat{h}_f^+ + \widehat{h}_f^-,\]
where $d$ is the common degree of $f$ and $f^{-1}$. 
The height function $\widehat{h}_f^+$ (resp. $\widehat{h}_f^-$) detects the arithmetic complexity of the forward orbit (resp. of the backward orbit) of a point in $\mathbb{A}^2(\bar{\mathbf{K}})$.

A particularly interesting case of regular plane automorphisms is H\'enon maps, i.e. maps of the form  $f(x, y) = (ay, x + p (y))$ with $a\in \mathbf{K}^*$ and $p (x) \in \mathbf{K}[x]$. In that setting, Ingram proved the following (\cite[Theorem 1.2]{Ingram_Henon})
\begin{theorem}[Ingram]\label{tm:Ingram}
  Let $k$ be any field and let $\mathbf{K}$ be the field of rational functions of a smooth projective $k$-variety. Let $f(x, y) = (y, x + p(y))$ for
	$p (x) \in \mathbf{K}[x]$ of degree at least $2$. Then either $f$ is isotrivial or else the set of elements $z \in \mathbb{A}^2 (\mathbf{K})$ with $\widehat{h}_f(z)=0$, is finite, bounded in
	size in terms of the number of places of bad reduction for $f$. In particular,	if $f$ is not isotrivial, then $\widehat{h}_f(z)=0$ if and only if $z$ is periodic for $f$.
\end{theorem}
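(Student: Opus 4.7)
The plan is to prove Ingram's theorem by decomposing the canonical height $\widehat{h}_f$ into local contributions indexed by the places of $\mathbf{K}$. Fixing a projective model of $\mathbf{K}$, the standard height on $\p^2(\mathbf{K})$ decomposes as $h(z)=\sum_v n_v \lambda_v(z)$ over places $v$ of $\mathbf{K}$. Passing to the limit in the definitions of $\widehat{h}_f^{\pm}$, I would construct $v$-adic dynamical Green functions
\[G_{f,v}^\pm(z)\ =\ \lim_{n\to+\infty}d^{-n}\log^+\|f^{\pm n}(z)\|_v,\]
satisfying $G_{f,v}^\pm\circ f^{\pm 1}=d\cdot G_{f,v}^\pm$, and derive the identity $\widehat{h}_f^\pm(z)=\sum_v n_v G_{f,v}^\pm(z)$ by telescoping.

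Next, I would distinguish places of good and bad reduction for $f$. At a place $v$ of good reduction, i.e.\ where $f$ admits a model as a H\'enon map integral over the local ring, the iterates of any $v$-integral point remain $v$-integral, so $G_{f,v}^{\pm}$ vanish identically on $\mathbb{A}^2(\mathbf{K})$. The decomposition then reduces to a finite sum over places of bad reduction, which will eventually produce the quantitative bound in terms of the number of bad places stated in the theorem.

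The heart of the proof is to show that at a bad place $v_0$ only finitely many $\mathbf{K}$-rational points can belong to the filled Julia set $K_{v_0}=\{z:G_{f,v_0}^+(z)=G_{f,v_0}^-(z)=0\}$. Non-isotriviality of $f$ should guarantee at least one place where the $v_0$-adic dynamics are genuinely expanding, and there a similarity or renormalization argument near a repelling periodic orbit of the $v_0$-adic H\'enon map should force $K_{v_0}\cap\mathbb{A}^2(\mathbf{K})$ to be discrete in the $v_0$-topology, hence finite by compactness of $K_{v_0}$. I expect this local rigidity statement at a bad place to be the main obstacle, as it has to rule out a Zariski-dense invariant set of sections coming from a hidden trivialization of $f$.

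Once finiteness of $Z=\{z\in\mathbb{A}^2(\mathbf{K}):\widehat{h}_f(z)=0\}$ is established, the equivalence with periodicity is soft: since $\widehat{h}_f^{\pm}\geq 0$, vanishing of $\widehat{h}_f$ forces $\widehat{h}_f^+=\widehat{h}_f^-=0$, and the functional equations $\widehat{h}_f^+\circ f^{\pm 1}=d^{\pm 1}\widehat{h}_f^+$ and $\widehat{h}_f^-\circ f^{\pm 1}=d^{\mp 1}\widehat{h}_f^-$ imply that $Z$ is preserved by both $f$ and $f^{-1}$. A finite set bijectively preserved by $f$ consists entirely of periodic points, completing the proof.
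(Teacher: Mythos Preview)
This theorem is not proved in the paper: it is quoted as Ingram's result \cite[Theorem~1.2]{Ingram_Henon} and serves only as motivation. There is therefore no ``paper's own proof'' to compare your proposal against. What the paper does prove is the Main Theorem, a generalization to all regular plane automorphisms in characteristic zero, and its method is entirely different from yours. The paper first reduces to $k=\mathbb{C}$ and $\dim\mathcal{B}=1$, then works complex-analytically: it expresses $\widehat{h}_f^+(z)$ as the mass of a bifurcation current $\widehat{T}_\mathcal{f}\wedge[\mathcal{Z}]$ (Proposition~\ref{lm:height-henon}), shows via a degeneration estimate for the Green function near the indeterminacy point (Lemma~\ref{goodgrowth2}) and a DSH cut-off argument that vanishing of this mass is equivalent to stability of the marked section (Propositions~\ref{Stable=bounded-henon} and~\ref{prop:global-stable}), and finally uses a similarity argument at \emph{saddle} periodic points together with the absence of invariant algebraic curves to force periodicity (Theorem~\ref{tm:henon}). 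No $v$-adic decomposition appears anywhere.

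Your outline is closer in spirit to Ingram's original argument, but as written it has real gaps. First, H\'enon maps are automorphisms, so their periodic points are saddles rather than repelling; a ``renormalization near a repelling orbit'' does not exist in this setting, and the correct local argument has to use the stable/unstable lamination structure instead. Second, the step ``$K_{v_0}\cap\mathbb{A}^2(\mathbf{K})$ is discrete, hence finite by compactness of $K_{v_0}$'' is not justified: in the non-archimedean topology over a function field the filled Julia set need not be compact in a sense that turns discreteness of $\mathbf{K}$-points into finiteness, and this is exactly where Ingram's quantitative bound in terms of bad places requires genuine work. Finally, you have not said how non-isotriviality produces a place where the dynamics are ``genuinely expanding''; this is the crux of the argument and cannot be waved through. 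The last paragraph of your sketch (finiteness $\Rightarrow$ periodicity via $f$-invariance of the zero-height set) is correct and standard.
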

The map $f$ is isotrivial if, after a suitable change of coordinates, the coefficients of $f$ are constant, i.e. belong to $k$. 
Ingram ask whether one can prove a similar statement for  $f(x, y) = (ay, x + p (y))$ with $a\in \mathbf{K}^*$. This is the purpose of this article in the case where $k$ has characteristic zero (Ingram result allows positive characteristic). 
More precisely, 
\begin{enumerate}
	\item We generalize the above statement to any regular polynomial automorphism $f:\mathbb{A}^2\to\mathbb{A}^2$ defined over a function field of characteristic $0$. More precisely, we establish the \emph{geometric dynamical Northcott Property}: if the map $f$ is not isotrivial, then $\widehat{h}_f(z)=0$ if and only if $z$ is periodic for $f$ and they are only finitely many such points.
	\item We replace the hypothesis that $\widehat{h}_f(P)=0$ with the a priori weaker one $\widehat{h}^+_f(P)=0$.
	\item We express, in the case where $\mathbf{K}$ is the field of rational functions of a smooth complex projective curve, the canonical height of a subvariety as the mass of an appropriate bifurcation current. Then, we show a marked point is stable, in the sense of complex dynamics, if and only if its canonical height is zero.
\end{enumerate}

\medskip

 Consider a regular plane automorphism $f$ over the function field $\mathbf{K}$. As $k$ is a field of characteristic zero, up to replacing it with an algebraic extension, we can assume there exists an algebraically closed subfield $k'$ of $k$ such that $f$ is defined over $k'$ and the transcendental degree of $k'$ over $\overline{\mathbb{Q}}$ is finite. In particular, $k'$ can be embedded in the field $\C(\mathcal{A})$ of rational functions of a normal projective complex variety $\mathcal{A}$, so that we can assume $\mathbf{K}=\C(\mathcal{A}\times\mathcal{B})$. Hence, we can assume $f$ is defined over $\C(\mathcal{B})$ for $\mathcal{B}$ a complex normal projective variety. We thus restrict to the case $k=\C$ in the rest of the paper and $\mathbf{K}=\C(\mathcal{B})$. This will enable the use of complex methods.

To a regular automorphism $f:\mathbb{A}_\mathbf{K}^2\to \mathbb{A}_\mathbf{K}^2$, we can associate a \emph{model} over $\mathcal{B}$ where $\mathbf{K}=\C(\mathcal{B})$, i.e. a birational map $\mathcal{f}:\mathbb{A}^2(\C)\times\mathcal{B}\dashrightarrow(f_\lambda(z),\lambda)\in\mathbb{A}^2(\C)\times\mathcal{B}$ 
such that $\pi\circ \mathcal{f}=\pi$, where $\pi:\mathbb{A}^2(\C) \times \mathcal{B}\to\mathcal{B}$ is the canonical projection, and such that there exists a Zariski open subset $\Lambda\subset \mathcal{B}$ for which $\mathcal{f}$ restricts to $\mathbb{A}^2(\mathbb{C})\times\Lambda$ as an automorphism and such that $f_\lambda:=\mathcal{f}|_{\mathbb{A}^2(\C)\times\{\lambda\}}$ is a complex regular polynomial automorphism for any $\lambda\in\Lambda$. The map $f$ can be identified with the restriction of $\mathcal{f}$ to the generic fiber of $\pi$. The open set $\Lambda$ is the \emph{regular part} of the family $\mathcal{f}$.
To any point $z\in\mathbb{A}^2(\mathbf{K})$ one can also associate a rational map $\mathcal{z}:\mathcal{B}\to\C$ such that $\mathcal{z}$ is defined on $\Lambda$. Such $\mathcal{z}$ is called a \emph{marked point}. We say that $\mathcal{z}$ is \emph{stable} if the sequence of iterates $\lambda \mapsto f_\lambda^n(\mathcal{z}(\lambda))$ is normal on compact subsets of $\Lambda$ (see Remark~\ref{rm:Bishop} below). 

\medskip

Finally, we say that a regular polynomial automorphism $f:\mathbb{A}^2_\mathbf{K}\to\mathbb{A}^2_\mathbf{K}$ is \emph{isotrivial} if there exists an affine automorphism $\varphi:\mathbb{A}^2_{\bar{\mathbf{K}}}\to\mathbb{A}^2_{\bar{\mathbf{K}}}$ such that $\varphi^{-1}\circ f\circ \varphi$ is defined over $\C$, or equivalently if for any model $\mathcal{f}:\C^2\times\mathcal{B}\dashrightarrow\C^2\times\mathcal{B}$ with regular part $\Lambda$ and for any $\lambda,\lambda'\in\Lambda$, there is an affine automorphism $\varphi_{\lambda,\lambda'}:\C^2\to\C^2$ such that $\varphi_{\lambda,\lambda'}^{-1}\circ f_\lambda\circ \varphi_{\lambda,\lambda'}=f_{\lambda'}$.

\medskip

Our result can then be stated as

\begin{mainTheorem}
Let $\mathcal{f}:\mathbb{C}^2\times\mathcal{B}\dashrightarrow\mathbb{C}^2\times\mathcal{B}$ be a non-isotrivial algebraic family of regular polynomial automorphisms of degree $d\geq2$ parametrized by a complex projective variety $\mathcal{B}$ with regular part $\Lambda$ and let $f:\mathbb{A}^2_\mathbf{K}\to\mathbb{A}^2_\mathbf{K}$ be the induced regular automorphism over the field $\mathbf{K}=\C(\mathcal{B})$ of rational functions of $\mathcal{B}$. Then
\begin{enumerate}
\item for any point $z\in\mathbb{A}^2(\mathbf{K})$ with corresponding rational map $\mathcal{z}:\mathcal{B}\dashrightarrow \mathbb{P}^2$,
\begin{equation}
(\mathcal{f},\mathcal{z}) \ \text{is stable} \Longleftrightarrow \widehat{h}_f(z)=0 \Longleftrightarrow \widehat{h}_f^+(z)=0 \Longleftrightarrow z \ \text{is periodic}.\tag{$\star$}\label{equivalence-stable}
\end{equation}
\item the set of marked points $\{\mathcal{z}$ such that $(\mathcal{f},\mathcal{z})$ is stable$\}$ is a finite set. In particular, a stable marked point is stably periodic.
\end{enumerate}
\end{mainTheorem}


This generalizes Ingram's Theorem~\ref{tm:Ingram}:
\begin{Corollary}
Let $k$ be a field of characteristic zero and $\mathbf{K}$ be the field of rational functions of a projective $k$-variety. Let $f:\mathbb{A}^2_\mathbf{K}\to\mathbb{A}^2_\mathbf{K}$ be a regular polynomial automorphism of degree $d\geq2$. Then either $f$ is isotrivial or else the set of elements $P \in \mathbb{A}^2 (\mathbf{K})$ with $\widehat{h}_f(P)=0$, is finite

In particular, if $f$ is not isotrivial, then $\widehat{h}_f(P)=0$ if and only if $P$ is periodic for $f$.
\end{Corollary}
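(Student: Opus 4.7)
I would deduce the Corollary from the Main Theorem via the descent sketched in the paragraph preceding the Main Theorem. Since the finitely many coefficients of $f$ (as a regular plane automorphism over $k(\mathcal{B})$) lie in an algebraically closed subfield $k_1 \subseteq \bar k$ of finite transcendence degree over $\overline{\Q}$, and such a $k_1$ embeds into $\C(\mathcal{A})$ for a suitable smooth complex projective variety $\mathcal{A}$, the map $f$ can be reinterpreted as a regular plane automorphism over $\mathbf{K}' := \C(\mathcal{B}')$, where $\mathcal{B}'$ is a complex projective variety that contains an algebraic copy of $\mathcal{B}$ as a closed subvariety of a fiber of the first projection.

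Assuming $f$ is not isotrivial over $\mathbf{K}$, the next step is to verify that it remains non-isotrivial over $\mathbf{K}'$. Both notions of isotriviality are equivalent to the geometric condition that every pair of fibers of an algebraic model of $f$ is affinely conjugate, and since $\mathcal{B}$ embeds into $\mathcal{B}'$, pairwise affine conjugacy of all $\C$-fibers over $\mathcal{B}'$ would force pairwise affine conjugacy of all $\bar k$-fibers over $\mathcal{B}$. Applying the Main Theorem to $(f, \mathbf{K}')$ then yields a finite set $S \subseteq \mathbb{A}^2(\mathbf{K}')$ of periodic points characterized by $\widehat{h}_f = 0$. Restricting to $\mathbb{A}^2(\mathbf{K}) \subseteq \mathbb{A}^2(\mathbf{K}')$, and invoking the standard compatibility of the canonical height with base extension of function fields, namely that the height over $\mathbf{K}'$ agrees up to a positive constant with the one over $\mathbf{K}$ on $\mathbf{K}$-rational points, gives the desired finiteness of $\{P \in \mathbb{A}^2(\mathbf{K}) : \widehat{h}_f(P) = 0\}$ and its coincidence with the periodic points of $f$.

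The principal subtlety, and the step I expect to require the most care, is the matching of the two definitions of isotriviality across the extension $\mathbf{K} \hookrightarrow \mathbf{K}'$: the target of the "constant-coefficient" conjugate is $\bar k$ in one case and $\C$ in the other, and one must pass through the fiberwise geometric characterization to see that the two notions do correspond under the embedding $\mathcal{B} \hookrightarrow \mathcal{B}'$. The descent to $k = \C$ and the compatibility of canonical heights under extension of the function field are by contrast entirely standard.
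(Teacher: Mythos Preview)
Your proposal is correct and follows essentially the same route as the paper: the Corollary is not given a separate proof there but is deduced from the Main Theorem via exactly the reduction you describe (finitely generated subfield of $\bar k$, embedding into $\C(\mathcal{A})$, passage to $\mathbf{K}'=\C(\mathcal{A}\times\mathcal{B})$), which is sketched in the paragraph of the introduction immediately preceding the statement of the Main Theorem. Your identification of the isotriviality comparison as the one point requiring care is apt; the paper handles it implicitly through the fiberwise affine-conjugacy characterization, just as you do.
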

For H\'enon maps over number fields, the finiteness of periodic points is due to Silverman~\cite{Silverman-Henon}. Constructing the canonical heights, Kawaguchi~\cite{Kawaguchi-Henon} proved this result over number fields for regular polynomial automorphisms using the classical Northcott property (see also Lee \cite{Lee-Henon} for an alternate construction). Over a function field, such result was established for polynomials of $\mathbb{A}^1$ by Benedetto \cite{benedetto} and rational maps of $\p^1$ by Baker \cite{Baker-functionfield} and DeMarco \cite{demarco}. In higher dimension, Chatzidakis and Hrushovski gave a model-theoretic version of the statement for polarized endomorphisms in~\cite{Chatzidakis-Hrushovski}. Finally, in \cite{GV_Northcott}, we extended the Northcott property to any polarized endomorphisms, giving a similar statement as that of the Main Theorem.

\medskip
In order to prove the Main Theorem, we adapt the strategy of the proof of \cite[Theorem A]{GV_Northcott} to the case of regular polynomial automorphisms. New difficulties appear since we need to deal with indeterminacy points and saddle periodic points instead of repelling periodic points.

Note that, 
\begin{itemize}
	\item if $f$ is non-isotrivial and $\lambda_0\in\Lambda$ is fixed, then the set of parameters $\lambda$ such that $f_\lambda$  is conjugated to $f_{\lambda_0}$ is a closed subvariety of $\Lambda$,
	\item the restriction of a stable marked point to a subvariety $\mathcal{B}'$ of $\mathcal{B}$ is still stable,
	\item  If $\mathcal{B}'$ is a subvariety of $\mathcal{B}$ and $z \in \C(\mathcal{B})$ has height zero, then it defines a point in $\C(\mathcal{B}')$ whose height is again $0$ by Bézout. Similarly, if for all $\mathcal{B}'$, the corresponding point in $\C(\mathcal{B}')$ has height $0$, then so does $z$.
\end{itemize} 
 In particular, we can reduce to the case where $\dim\mathcal{B}=1$ by a slicing argument. We thus restrict to the case where $\mathbf{K}$ is the field of rational functions of a smooth complex projective curve $\mathcal{B}$. Finally, up to taking a branched cover of $\mathcal{B}$, conjugating by a suitable affine automorphism $\Phi \in \mathrm{Aut}( \mathbb{A}^2_{\bar{\mathbf{K}}})$ and reducing $\Lambda$, we can assume that the indeterminacy point $I(f_\lambda)$ of $f_\lambda$ is $[1:0:0]$ and  the indeterminacy point $I(f^{-1}_\lambda)$ of $f^{-1}_\lambda$ is $[0:1:0]$ for every $\lambda\in\Lambda$.

\medskip

 Pick a point $z\in \mathbb{A}^2(\mathbf{K})$ and let $\mathcal{Z}_n$ be the irreducible subvariety of $\mathbb{P}^2(\C) \times \mathcal{B}$ induced by $f^n(z)$.  Using Kawaguchi's comparison result on heights \cite{Kawaguchi-Henon}, we show that there exists some $B_0>0$, independent of $z$, such that $\widehat{h}_f(z) =0 $ implies $\deg_\mathcal{M}(\mathcal{Z}_n) \leq B_0$ for all $n\in\mathbb{Z}$, so they are only finitely many degrees to consider. 
 
 \medskip
 
 Then, we show that (local) stability is equivalent to having zero height for regular polynomial automorphisms: we express $\widehat{h}^+_f(z)$ in term of an appropriate bifurcation current and show that $\widehat{h}^+_f(z)=0$ if and only if the sequence $(\deg_\mathcal{M}(\mathcal{Z}_n))_{n\geq0}$ is bounded by some constant $D>0$ independent of $n$, hence forward stability is in fact a global notion (Propositions~\ref{prop:global-stable} and \ref{lm:height-henon}). For that, we prove a delicate degeneracy estimate of the Green function to deal with the indeterminacy point, this allows us to construct a DSH cut-off function (DSH functions, introduced by Dinh-Sibony \cite{dinhsibony2}, take into account the complex structure whereas $C^2$ functions do not).  Then we show that if $\mathcal{Z}$ is forward stable, then it is periodic (note that in \cite{Ingram_Henon}, one does not relate zero height with stability).

Finally, we give an application to a conjecture of Kawaguchi and Silverman in the case of regular polynomial automorphism.

\section{Algebraic dynamical pairs of regular polynomial automorphism type}\label{sec:henon}\label{section8}

\subsection{Definition and first properties}

Let $\mathcal{B}$ be a smooth projective complex curve and $\Lambda\subset \mathcal{B}$ a Zariski open subset. We  let $\mathcal{f}:\C^2\times\Lambda\longrightarrow\C^2\times\Lambda$ be an algebraic family of regular polynomial automorphisms of $\C^2$. For each $\lambda\in \Lambda$, $f_\lambda(x,y)=(p_\lambda(x,y),q_\lambda(x,y))$ where $p_\lambda, q_\lambda$ are polynomials in $(x,y)$ that depend holomorphically on $\lambda$ with $\max \deg (p_\lambda, q_\lambda)=d$ independent of $\Lambda$ (up to restricting  $\Lambda$). We assume that the map $f_\lambda$ extends as a birational map $f_\lambda:\p^2\dashrightarrow \p^2$ with
\begin{enumerate}
	\item the only indeterminacy point $I^+$ of $f_\lambda$ is $I^+=[1:0:0]$,
	\item the only indeterminacy point $I^-$ of $f_\lambda^{-1}$ is $I^-=[0:1:0]$.
\end{enumerate}
We call such $\mathcal{f}$ an \emph{algebraic family of regular polynomial automorphisms}. From our normalization, we see that $\deg p_\lambda(x,y)< \deg q_\lambda(x,y) = \deg_y q_\lambda(x,y)$ and that $I^-$ (resp. $I^+$) is a super-attracting fixed point for $f_\lambda$ (resp. $f_\lambda^{-1}$).

A classical example is given by H\'enon maps:
\[\mathcal{f}(x,y,\lambda)= (a(\lambda)y, x+p_\lambda(y), \lambda)  \]
where $\mathcal{p}:\C\times\Lambda\to\C$ is an algebraic family of degree $d>1$ polynomials in one complex variable parametrized by the quasi-projective variety $\Lambda$ with $p_\lambda(y)=\mathcal{p}(y,\lambda)$, $a\in\C[\Lambda]^*$ and the support of $\mathrm{div}(a)$ is contained in the finite set $\mathcal{B}\setminus\Lambda$. 
%
\medskip

We say that the family $\mathcal{f}$ is \emph{isotrivial} if there exist a finite branch cover $\rho:\Lambda'\to\Lambda$ and an algebraic family of invertible affine maps $\psi: \C^2 \times \Lambda' \to\C^2$ such that, writing $\psi((x,y),\lambda):=(\psi_\lambda(x,y),\lambda)$, there exists $\lambda_0\in\Lambda$ such that for all $\lambda\in\Lambda'$,
\[\psi_\lambda^{-1}\circ f_{\rho(\lambda)}\circ \psi_\lambda=f_{\lambda_0}.\]
Let us recall some useful facts on regular polynomial automorphisms (\cite{BedfordSmillie1, BedfordLyubichSmillie, BedfordLyubichSmillie2}). In what follow, a $(p,p)$-current is a current of bidegree $(p,p)$. Such objects are powerful tools in complex dynamics.  
\begin{definition}
The \emph{fibered Green current} of $\mathcal{f}$ is the positive closed $(1,1)$-current $\widehat{T}_\mathcal{f}$ on $\p^2\times\Lambda$ defined by
\[\widehat{T}_\mathcal{f}:=\lim_{n\to+\infty}\frac{1}{d^n}(\mathcal{f}^n)^*(\widehat{\omega}).\]
\end{definition}
It is known that the convergence holds, that $\mathcal{f}^*\widehat{T}_\mathcal{f}:=d\widehat{T}_\mathcal{f}$ and that, for any $\lambda\in\Lambda$, the slice $\widehat{T}_\mathcal{f}|_{\p^2\times \{\lambda\}}$ of $\widehat{T}_\mathcal{f}$ is the forward Green current $T^+_{f_\lambda}$ of $f_\lambda$.

By definition,  $\mathcal{f}^{-1}:\C^2\times\Lambda\to\C^2\times\Lambda$ is also a family of regular polynomial automorphisms so we can similarly defined the backward Green current $\widehat{T}_{\mathcal{f}^{-1}}$ as the fibered Green current of $\mathcal{f}^{-1}$. Then, the current $\widehat{T}_\mathcal{f}\wedge \widehat{T}_{\mathcal{f}^{-1}}$ is well defined and, for any $\lambda\in\Lambda$, the slice $(\widehat{T}_\mathcal{f}\wedge \widehat{T}_{\mathcal{f}^{-1} })|_{\p^2\times\{\lambda\}}$ is the unique maximal entropy measure $\mu_{f_\lambda}$ of $f_\lambda$. As the measure $\mu_{f_\lambda}$ gives no mass to analytic sets and is equidistributed by saddle points, it follows that saddle points are Zariski dense in $\C^2$. So the set of points of the form $((x_0,y_0),\lambda_0)$ such that $(x_0,y_0)$ is a saddle periodic point of $f_{\lambda_0}$ is Zariski dense in $\C^2\times\Lambda$.

Let $\pi:\p^2\times\mathcal{B}\to\mathcal{B}$ be the projection onto the first coordinate.
\begin{definition} Let  $\mathcal{Z}\subset  \p^{2} \times \mathcal{B}$ be an irreducible algebraic curve.  	
	We say that $(\Lambda,\mathcal{f},\mathcal{Z})$ is an \emph{algebraic dynamical pair of regular automorphism-type} if  $\pi|_\mathcal{Z}:\mathcal{Z}\to\mathcal{B}$ is a flat morphism and such that $\mathcal{Z}\cap\left(\p^2\times\Lambda)\right)\subset\C^2\times\Lambda$.\end{definition}

\subsection{A degeneration lemma}

We here prove the following degeneration Lemma in the spirit of \cite[Lemma 12]{GV_Northcott}, which is crucial in what follows. We use ideas of \cite[Lemma 3.2.4.]{ThelinVigny1}. For that, we may view $\Lambda$ as an affine curve: let $F:=\mathcal{B}\setminus\Lambda$ and $D$ be a very ample divisor supported by $F$. This defines an embedding $\iota_1:\mathcal{B}\hookrightarrow\p^{M}$ with $\iota_1^{-1}(\mathbb{A}^{M}(\mathbb{C}))=\Lambda$.
In what follows, $\lambda$ denotes the affine coordinate $\iota(\lambda)$ on $\Lambda$. 
\begin{lemma}\label{goodgrowth2}
	There exist constants $C_1,C_2,C_3>0$ such that for all $n\geq1$, we have
	\[ \frac{1}{d^{n}}(\mathcal{f}^n)^*(\widehat{\omega}) - \widehat{T}_\mathcal{f} = dd^c \phi_n, \ \ \text{on} \ \p^{2}\times\Lambda, \]
with  $|\phi_n(x,y,\lambda)| \leq d^{-n}(C_1 \log^{+}|\lambda|+C_2\log^+ 1/\|d((x,y),I^+)\|+C_3)$, for all $\lambda\in\Lambda$. 
\end{lemma}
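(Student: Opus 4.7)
The plan is to construct an explicit potential $\psi$ of the defect form $\mathcal{f}^{\ast}\widehat{\omega}-d\widehat{\omega}$, prove a pointwise estimate on $\psi$, and then bound the resulting telescoping series using the forward dynamics of $\mathcal{f}$. Let $F_\lambda(X,Y,Z)=\bigl(Z^{d}p_\lambda(X/Z,Y/Z),\, Z^{d}q_\lambda(X/Z,Y/Z),\, Z^{d}\bigr)$ be the homogeneous lift of $f_\lambda$ of degree $d$, and set
\[\psi\bigl([X{:}Y{:}Z],\lambda\bigr):=\log\|F_\lambda(X,Y,Z)\|-d\log\|(X,Y,Z)\|,\]
which is invariant under scaling of $(X,Y,Z)$ and hence descends to a function on $\bigl(\p^{2}\setminus\{I^{+}\}\bigr)\times\Lambda$ satisfying $\mathcal{f}^{\ast}\widehat{\omega}=d\widehat{\omega}+dd^{c}\psi$ (modulo a purely $\lambda$-dependent term of size $O(\log^{+}|\lambda|)$ coming from the $\mathcal{B}$-factor of $\widehat{\omega}$). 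Telescoping then immediately yields
\[d^{-n}(\mathcal{f}^{n})^{\ast}\widehat{\omega}-\widehat{T}_{\mathcal{f}}=-dd^{c}\phi_{n},\qquad\phi_{n}:=-\sum_{k=n}^{\infty}d^{-(k+1)}\,\psi\circ\mathcal{f}^{k},\]
reducing the problem to estimating this series.

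The first key step is the pointwise bound
\[|\psi(x,y,\lambda)|\le C_{1}\log^{+}|\lambda|+C_{2}\log^{+}1/\|d((x,y),I^{+})\|+C_{3}.\]
The upper bound is immediate: the coefficients of $p_\lambda,q_\lambda$ are regular on $\Lambda$ and hence grow polynomially in $|\lambda|$ in the affine embedding $\iota_{1}\colon\Lambda\hookrightarrow\mathbb{A}^{M}$, yielding $\|F_\lambda(X,Y,Z)\|^{2}\le C(1+|\lambda|)^{2M}\|(X,Y,Z)\|^{2d}$. For the lower bound near $I^{+}$, I would exploit the normalization that $I^{+}=[1{:}0{:}0]$ is the \emph{unique} indeterminacy point, which forces $\deg p_\lambda<d$ and $q_\lambda^{(d)}(x,y)=c_\lambda y^{d}$ with $c_\lambda\neq 0$. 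In the chart $(u,v)=(Y/X,Z/X)$ centered at $I^{+}$ a direct computation gives
\[F_\lambda(1,u,v)=\bigl(v\,P_\lambda(u,v),\ c_\lambda u^{d}+v\,Q_\lambda(u,v),\ v^{d}\bigr),\]
hence $\|F_\lambda(1,u,v)\|^{2}\ge c(|u|^{2d}+|v|^{2})$ near the origin, so $\psi\ge\log\|(u,v)\|-C\log^{+}|\lambda|-C'$ near $I^{+}$; combined with the boundedness of $\psi$ on compact subsets of $\bigl(\p^{2}\setminus\{I^{+}\}\bigr)\times\Lambda$ (with only $O(\log^{+}|\lambda|)$ growth in $\lambda$), this gives the claim.

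Applying the pointwise bound termwise at $\mathcal{f}^{k}(x,y,\lambda)=(f_{\lambda}^{k}(x,y),\lambda)$ gives
\[|\phi_{n}(x,y,\lambda)|\le\sum_{k=n}^{\infty}d^{-(k+1)}\Bigl(C_{1}\log^{+}|\lambda|+C_{2}\log^{+}1/\|d(f_{\lambda}^{k}(x,y),I^{+})\|+C_{3}\Bigr).\]
The $\log^{+}|\lambda|$ and constant contributions sum trivially to $d^{-n}$ times a constant of the claimed form. The delicate part, which I expect to be the main obstacle, is the distance contribution: one needs the uniform dynamical estimate $d(f_{\lambda}^{k}(z),I^{+})\ge\delta(\lambda)>0$ for all $k\ge1$ and $z\in\p^{2}\setminus\{I^{+}\}$, with $\log^{+}1/\delta(\lambda)\le C\log^{+}|\lambda|+C'$. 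This rests on three ingredients: the algebraic stability of $f_\lambda$ (so that the iterated indeterminacy locus remains $\{I^{+}\}$); the fact that the line at infinity minus $\{I^{+}\}$ is contracted by $\mathcal{f}$ to the super-attracting fixed point $I^{-}$ of $f_\lambda$, so that forward iterates are uniformly pulled toward $I^{-}$ and away from $I^{+}$; and polynomial control in $|\lambda|$ via the explicit coefficient estimates. Once this uniform estimate is in hand, summing the geometric series yields the asserted bound, with the $C_{2}\log^{+}1/\|d((x,y),I^{+})\|$ term in the final estimate providing slack for the singular behavior at $\{I^{+}\}$.
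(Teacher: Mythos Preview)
Your overall architecture coincides with the paper's: write $\phi_n$ as the tail series $\sum_{k\ge n}d^{-(k+1)}\psi\circ\mathcal{f}^k$, prove a pointwise bound on $\psi$ of the shape $C_1\log^+|\lambda|+C_2\log^+1/d(\cdot,I^+)+C_3$, and then control $\log^+1/d(f_\lambda^k(z),I^+)$ along the forward orbit. The first two steps are essentially right (see the minor remark below). The gap is in the third.

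\textbf{The dynamical estimate is false as stated.} You claim $d(f_\lambda^k(z),I^+)\ge\delta(\lambda)$ for all $k\ge1$ and all $z\in\p^2\setminus\{I^+\}$. This cannot hold: points arbitrarily close to $I^+$ may map under $f_\lambda$ to points still arbitrarily close to $I^+$. For the H\'enon map $f(x,y)=(y,x+y^2)$, take $z=(-y^2,y)$ with $|y|$ large; then $d(z,I^+)\asymp 1/|y|$ and $f(z)=(y,0)$ also satisfies $d(f(z),I^+)\asymp 1/|y|$. So no $\delta(\lambda)$ independent of $z$ exists, and your series estimate (which would in fact produce a bound \emph{without} any $\log^+1/d((x,y),I^+)$ term) breaks down. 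The correct inequality, and the one the paper proves, is
\[
\log^+\frac{1}{d(f_\lambda^k(z),I^+)}\ \le\ \kappa\log^+|\lambda|+\log^+\frac{1}{d(z,I^+)}\qquad(k\ge0),
\]
where the initial distance reappears. The paper obtains this by building an explicit $\mathcal{f}^{-1}$-invariant region $B^+=\{|x|\ge\max(|t|^{-\kappa},|y|)\}$ around $I^+$ (here $t$ is a local coordinate at a boundary point of $\Lambda$) and showing two things: (i) inside $B^+$ one has $d(f_\lambda^{-1}(w),I^+)\le d(w,I^+)$, so forward iteration never decreases the distance to $I^+$; (ii) outside $B^+$ the distance to $I^+$ is automatically $\ge|t|^{\kappa}$. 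Combining (i) before the orbit leaves $B^+$ with (ii) afterwards gives the displayed bound, and summing the geometric series then yields exactly the $d^{-n}(C_1\log^+|\lambda|+C_2\log^+1/d((x,y),I^+)+C_3)$ of the lemma. Your concluding remark that the $C_2\log^+1/d((x,y),I^+)$ term ``provides slack'' is backwards: that term is not slack but is genuinely forced by the failure of your uniform estimate.

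\textbf{Minor point on the lower bound for $\psi$.} Your explicit inequality $\|F_\lambda(1,u,v)\|^2\ge c(|u|^{2d}+|v|^2)$ is too strong in general: for $f(x,y)=(y,x+y^3)$ one has $F(1,0,v)=(0,v^2,v^3)$, of order $|v|^2$ rather than $|v|$. What is true is $\|F_\lambda(1,u,v)\|\ge c\|(u,v)\|^\alpha$ for \emph{some} exponent $\alpha>0$, which suffices for the shape of the bound; the paper gets this (together with the uniform $\lambda$-dependence near $\mathcal{B}\setminus\Lambda$) via \L ojasiewicz's inequality rather than by direct computation. This is easily fixed and only affects the constant $C_2$.
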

 
\begin{proof} Pick any boundary point $\lambda_\star$ of $\Lambda$ in $\mathcal{B}$ and a punctured disk $D^*$ centered at $\lambda_\star$. Let $t$ be a local coordinate in $D^*$ centered at $\lambda_\star$ (i.e. $t=0$ at $\lambda_\star$). We first show that 	
	\begin{equation}\label{eq:step1_henon}
	 \frac{1}{d}\mathcal{f}^*(\widehat{\omega}) - \widehat{\omega} = dd^c \phi_1, \ \ \text{on} \  \p^2\times D^*, 
	 \end{equation}
	with  $|\phi_1(x,y,t)| \leq C_1 \log|t|^{-1}+C_2\log d((x,y),I^+)^{-1}+C_3$, where the constants $C_i$ do not depend on $t\in D^*$.
	
\medskip
	
Observe for that
\[ \frac{1}{d}\mathcal{f}^*(\widehat{\omega}) - \widehat{\omega} =\frac{1}{2d}dd^c\log \frac{|p_t(x,y)|^2+|q_t(x,y)|^2+1}{(|x|^2 +|y|^2+1)^d} \quad \text{on} \  \p^2\times D^*. \]
Let $F_t(x,y,w)=(z^dp_t(x/w,y/w), w^dq_t(x/w,y/w),w^d)$ be the homogeneous lift of $f_t$ to $\C^3$ and write $P:=(x,y,w)$. Then, on $ \p^2\times D^*$, we have 
	\[  \frac{1}{d}\mathcal{f}^*(\widehat{\omega}) - \widehat{\omega} = \frac{1}{2d}dd^c \log \frac{|F_t|^2}{|P|^{2d}}.\]  
The qpsh function $\phi_1:=(2d)^{-1}\log |F_t|^2/ |P|^{2d}$ is well defined on $ \C^3\times D^*$. Take some $\varepsilon>0$ and consider the open set $U:=  \{ 1-\varepsilon<|P|<1+\varepsilon\} \times D^*$. Multiplying by $t^k$ for $k$ large enough and shrinking $D^*$ if necessary, we have
	\[  \phi_1= c \log|t|^{-1} + d^{-1}\log |F_t'(P)| \]
	where  $F_t'(P)$ depends holomorphically on $(P,t)$ and is equal to $(0,0,0)\in \mathbb{C}^{3}$ exactly when $P\in \pi^{-1}(I^+)$ (where $\pi$ denotes the projection $\pi:\C^3\setminus\{0\}\to \p^2$) or when $t=0$. Using  \L ojasiewicz Theorem  \cite[Chapter IV, Proposition p.243]{Loj}, that provides us with two constants $\alpha>0$ and $C>0$ such that on $|P|=1$ we have:
		\[ |F_t'(P)| \geq C (\min (\text{dist}(P,\pi^{-1}(I^+)), \text{dist}(t,0))^{\alpha}.\]
	So reducing $D^*$, we have a constant $\beta$ such that
			\[ \log |F_t'(P)| \geq  \alpha \log |t|  + \alpha \log\text{dist}(P,\pi^{-1}(I^+))+\beta.\]
Since the function $ d^{-1}\log |F_t'(P)| $ is upper semi-continuous, it is bounded from above in $U$ (up to reducing $D^*$). Now from the fact that the projection $\pi:\C^3\setminus\{0\}\to \p^2$ is Lipschitz in $|P|=1$ and the above bound, we get constants $A>0$, $B$, $c$ such that:
\[ c \log|t|^{-1} +B \geq \phi_1 \geq c \log|t|^{-1}+ A\log \text{dist}(\cdot ,I^+).\]
In particular, the assertion \eqref{eq:step1_henon} holds.
	
	\bigskip
	
	Let us now prove the inequality of the lemma in $ \p^2\times D^*$ which will be sufficient by a covering argument (changing the coordinates might change the constants $C_1,C_2,C_3$). Observe first that we can set $\phi_n:= \sum_{k\geq n} \frac{1}{d^k} \phi_1\circ \mathcal{f}^k$.
	So that, summing over $k$, it suffices to check that for any integer $k$, we have
	\[ |\phi_1\circ \mathcal{f}^k |  \leq  (C_1 \log|t|^{-1}+C_2\log d((x,y),I^+)^{-1}+C_3) \]
	where $C_1, C_2$, and $C_3$ do not depend on $k$. For that, we remark first that
	\[|\phi_1(x,y,t)  \circ \mathcal{f}^k | \leq C_1 \log|t|^{-1}+C_2\log d(f_t^k(x,y),I^+)^{-1}+C_3  .\]
Hence, it remains to control the behavior of the term $\log  d(f^k_t(x,y),I^+)^{-1}$. As $I^+$ is a super-attracting fixed point for $f_\lambda^{-1}$, one can find $\kappa>0$ large enough so that:
\[B^+:= \{ (x,y,t), \ |x|\geq \max(|t|^{-\kappa}, |y|)\}\] 
is stable by $\mathcal{f}^{-1}$: $\mathcal{f}^{-1}(B^+) \subset(B^+)$ (again, we shrink $D^*$ if necessary). Furthermore, for $(x,y,t)\in B^+$, we have
\begin{equation}\label{eq:in_B^+}
 d(f^{-1}_t(x,y),I^+) \leq  d((x,y),I^+).  
 \end{equation}
Indeed, this follows from the definition of $B^+$ and the fact that
\[d((x,y),I^+) \simeq \max \left(\frac{1}{|x|}, \frac{|y|}{|x|}\right).\]

Now, take a point $(x,y,t) \in \C^2\times D^*$. Observe that for 
$(x,y,t)\notin B^+ $, then, by definition, either $|x|\leq |t|^{-\kappa}$ or $|x|\leq |y|$. In the former case, $1/|x|\geq |t|^{\kappa}$ so that $d((x,y),I^+)\geq  |t|^{\kappa}$ and in the latter case, $|y|/|x|\geq 1$ so $d((x,y),I^+)\geq  1 \geq |t|^{\kappa}$. In particular, we see that 
if $(x,y,t)\notin B^+$ then $\log d((x,y),I^+)^{-1} \leq \kappa\log |t|^{-1}$. By the above, as $(x,y,t)\notin B^+$, then, $f_t^k(x,y)\notin B^+$ for all $k$ so that $\log d(f_t(x,y),I^+)^{-1} \leq \kappa\log |t|^{-1}$ for all $k$ and the estimate is satisfied in that case.  

On the other hand, for $(x,y,t)\in B^+ $, take $k_0$ the smallest integer such that $f^{k_0}_t(x,y)\notin B^+ $ (one can show that $k_0$ is finite but we will not need that fact). Then, by \eqref{eq:in_B^+}, we have for $k < k_0$, 
 \[d(f^{k}_t(x,y),I^+) \geq  d((x,y),I^+) \] 
 and for $k\geq k_0$:
  \[d(f^{k}_t(x,y),I^+) \geq  |t|^{\kappa}. \]
Thus, for any $k$, we have $\log d(f_t^k(x,y),I^+)^{-1} \leq \kappa \log |t|^{-1} + \log d((x,y), I^+)^{-1}$.

\medskip

The lemma follows, since the local coordinate in any disk $D	^*$ centered at a point of $\mathcal{B}\setminus\Lambda$ grows at a rate $\lambda^{-\tau}$ for some $\tau>0$, where $\lambda$ is the affine coordinate on $\Lambda$ defined above, and since $\mathcal{B}\setminus\Lambda$ is finite.
\end{proof}

\section{Height and stability}

\subsection{Zero height implies uniformly bounded degree}
Let $f:\mathbb{A}^2\to\mathbb{A}^2$ be a regular polynomial automorphism of degree $d$, defined over a field $\mathbf{k}$ of characteristic $0$.
It is known there exists a projective surface $V$ obtained by finitely many blow-ups of points $\pi:V\to\mathbb{P}^2$ such that $f\circ \pi$ and $f^{-1}\circ\pi$ both extend as morphisms $\psi_{\pm}:V\to \mathbb{P}^2$, i.e. the diagram
\begin{equation}\label{eq:blowup}
\xymatrix{ &&\ar[lld]_{\psi_+} V\ar[d]^\pi \ar[rrd]^{\psi_-} &&\\
	\mathbb{P}^2 \ar@{<--}[rr]_{f}  & & \mathbb{P}^2\ar@{-->}[rr]_{f^{-1}}  & & \mathbb{P}^2}
\end{equation}
commutes.
We rely on the next result of Kawaguchi~\cite[Theorem~2.1]{Kawaguchi-Henon}:

\begin{theorem}[Kawaguchi~\cite{Kawaguchi-Henon}]\label{tm:Kawaguchi}
	Let $f:\mathbb{A}^2\to\mathbb{A}^2$ be a regular polynomial automorphism of degree $d\geq2$,  let $V$, $\pi$ and $\psi_{\pm}$ be as in \eqref{eq:blowup}, and let $H_\infty$ be the line at infinity of $\mathbb{A}^2$. Then, as a $\mathbb{Q}$-divisor on $V$,
	\[D:=\psi_+^*H_\infty+\psi_-^*H_\infty-\left(d+\frac{1}{d}\right)\pi^*H_\infty\]
	is effective.
\end{theorem}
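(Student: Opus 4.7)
The plan is to verify the effectivity of the $\mathbb{Q}$-divisor $D$ by checking that the coefficient of each prime Weil divisor of $V$ is non-negative. The prime divisors in the support of $D$ are the strict transform $\widetilde{H_\infty}$ of $H_\infty$, together with the exceptional divisors arising in $\pi: V \to \mathbb{P}^2$, which split into divisors $E^+_i$ over $I^+$ and divisors $E^-_j$ over $I^-$. By the symmetry swapping $(f, \psi_+, I^+)$ with $(f^{-1}, \psi_-, I^-)$, it suffices to treat $\widetilde{H_\infty}$ and the divisors $E^+_i$.

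For $\Gamma = \widetilde{H_\infty}$: one has $\operatorname{ord}_{\widetilde{H_\infty}}(\pi^* H_\infty) = 1$. Since $\deg q = d$ in the normalization, the projective lift of $f$ takes the form $[X:Y:Z] \mapsto [F_1 : F_2 : Z^d]$ with $F_1, F_2$ homogeneous of degree $d$; in particular $f^* Z = Z^d$ and, symmetrically, $(f^{-1})^* Z = Z^d$. On the open set where $\pi$ is an isomorphism, $\psi_{\pm} = f^{\pm 1} \circ \pi$, so $\psi_\pm^* Z$ vanishes along $\widetilde{H_\infty}$ to order $d$. Hence $\operatorname{ord}_{\widetilde{H_\infty}}(\psi_\pm^* H_\infty) = d$, and the coefficient of $\widetilde{H_\infty}$ in $D$ equals $d + d - (d + 1/d) = d - 1/d > 0$ for $d \geq 2$.

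For $\Gamma = E^+_i$: set $m_i^+ := \operatorname{ord}_{E_i^+}(\pi^* H_\infty) \geq 1$. Since $I^+$ is a super-attracting fixed point of $f^{-1}$, the map $f^{-1}$ is a morphism at $I^+$ whose local form satisfies $(f^{-1})^* H_\infty = d \cdot H_\infty$ as germs of Cartier divisors at $I^+$. Pulling this identity back through $\pi$ over a neighbourhood of $I^+$ gives $\operatorname{ord}_{E^+_i}(\psi_-^* H_\infty) = d \cdot m_i^+$. The required inequality at $E^+_i$ then reduces to $\operatorname{ord}_{E^+_i}(\psi_+^* H_\infty) \geq m_i^+/d$. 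To prove this, decompose $\psi_+^* H_\infty = d \pi^* H_\infty + F$ in $\operatorname{Pic}(V)$, with $F$ supported on the exceptional locus. By the projection formula, $\psi_+^* H_\infty \cdot E = 0$ for every exceptional $E$ contracted by $\psi_+$; combined with the negative-definiteness of the intersection form on the exceptional locus, this pins down the coefficients of $F$. Inspecting these coefficients chain by chain yields $\operatorname{ord}_{E^+_i}(\psi_+^* H_\infty) \geq m_i^+/d$.

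The main obstacle is this last step: explicitly computing (or lower-bounding) the coefficients of $F$ along the chain of successive blow-ups realizing the resolution of $f$ at $I^+$. The number of such blow-ups and the multiplicities $m_i^+$ depend nontrivially on $d$, and obtaining the sharp lower bound $m_i^+/d$ requires a careful induction along this chain, matching the degree-$d$ behaviour of $f$ against the intersection structure of the exceptional fibre.
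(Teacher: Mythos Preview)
The paper does not supply a proof of this theorem: it is quoted from Kawaguchi~\cite[Theorem~2.1]{Kawaguchi-Henon} and used as a black box to derive Corollary~\ref{cor-goodineq}. So there is no ``paper's own proof'' to compare against; any complete argument you give would already go beyond what the present paper does.

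That said, your proposal is not a complete proof, and you acknowledge this yourself. The reduction to checking coefficients along $\widetilde{H_\infty}$ and along the exceptional components $E_i^\pm$ is the natural strategy, and your computations for $\widetilde{H_\infty}$ and for $\operatorname{ord}_{E_i^+}(\psi_-^*H_\infty)=d\,m_i^+$ are correct. The genuine gap is the last inequality
\[
\operatorname{ord}_{E_i^+}(\psi_+^*H_\infty)\ \geq\ \frac{m_i^+}{d},
\]
which you do not establish. The sketch you give---write $\psi_+^*H_\infty = d\,\pi^*H_\infty + F$ with $F$ exceptional, then use $\psi_+^*H_\infty\cdot E=0$ for curves contracted by $\psi_+$ together with negative-definiteness---does determine the coefficients of $F$ in principle, but it does not by itself yield the needed sign. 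One must know \emph{which} $E_i^+$ are contracted by $\psi_+$ (typically all but the final blow-up, which is mapped onto $H_\infty$), and then actually carry out the chain computation; the inequality is not formal. Kawaguchi's original argument performs exactly this explicit resolution of $f$ at $I^+$ and tracks the multiplicities through the successive blow-ups. Until you do that induction (or find a shortcut that bypasses it), the proof is incomplete at precisely the point you flag as the ``main obstacle''.
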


As in the case of number fields, this allows to prove that $\widehat{h}_f$ is comparable (from below) with the standard height function $h$ using the functoriality properties of height functions, see~\cite[Theorem~2.3 \& Theorem~4.1]{Kawaguchi-Henon}. We follow Kawaguchi's arguments and claim no originality in the proof. 

\begin{corollary}\label{cor-goodineq}
	Let $f:\mathbb{A}^2\to\mathbb{A}^2$ be a regular polynomial automorphism of degree $d\geq2$, defined over a global function field of characteristic $0$. Then, there is a constant $C>0$ such that for all $z\in\mathbb{A}^2(\bar{\mathbf{K}})$, we have
	\[\widehat{h}_f(z)\geq h(z)-C~.\]
\end{corollary}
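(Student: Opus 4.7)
The plan is to follow Kawaguchi's argument from~\cite{Kawaguchi-Henon}: first translate the effectivity of $D$ in Theorem~\ref{tm:Kawaguchi} into a height inequality on the affine plane, then iterate it symmetrically in $f$ and $f^{-1}$ so that the full canonical height $\widehat{h}_f$ emerges in the limit.

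Since $D = \psi_+^*H_\infty + \psi_-^*H_\infty - (d+1/d)\pi^*H_\infty$ is effective, the Weil height $h_D$ on $V$ is bounded below by a uniform constant outside $\mathrm{supp}(D)$. Pushing this through the diagram~\eqref{eq:blowup} via the functoriality of heights (and using that $\psi_\pm \circ \pi^{-1} = f^{\pm 1}$ on the affine chart) produces the key comparison
\begin{equation}
h(f(z)) + h(f^{-1}(z)) \;\geq\; \bigl(d + \tfrac{1}{d}\bigr)\, h(z) - C_1,\qquad z \in \mathbb{A}^2(\bar{\mathbf{K}}). \tag{$\ast$}
\end{equation}

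The heart of the argument is a symmetric iteration of $(\ast)$. Setting $S_n(z) := h(f^n(z)) + h(f^{-n}(z))$, and applying $(\ast)$ at both $f^n(z)$ and $f^{-n}(z)$ and summing, yields $S_{n+1} + S_{n-1} \geq (d + d^{-1}) S_n - 2 C_1$. The characteristic polynomial $x^2 - (d + d^{-1})x + 1$ factors as $(x - d)(x - d^{-1})$, so after absorbing the constant term into the particular solution $U_n := S_n - 2 C_1 d/(d-1)^2$ the inequality rewrites as
\[ U_{n+1} - d\,U_n \;\geq\; d^{-1}\,(U_n - d\,U_{n-1}), \]
which iterates to $U_{n+1} - d\,U_n \geq d^{-n}(U_1 - d\,U_0)$. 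Telescoping the identity $U_N/d^N - U_0 = \sum_{n<N}(U_{n+1} - d\,U_n)/d^{n+1}$ and letting $N \to \infty$, using $d^{-N} S_N \to \widehat{h}_f^+(z) + \widehat{h}_f^-(z) = \widehat{h}_f(z)$, yields
\[ \widehat{h}_f(z) \;\geq\; U_0 + \frac{d}{d^2-1}\bigl(U_1 - d\,U_0\bigr). \]
Since $U_0 = 2\, h(z) + O(1)$ and $(\ast)$ applied directly at $z$ forces $U_1 - d\,U_0 \geq -\tfrac{d^2-1}{d}\,h(z) + O(1)$, substitution produces $\widehat{h}_f(z) \geq h(z) - C$.

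The main obstacle is ensuring that the limit of the iteration recovers the full $\widehat{h}_f$ rather than a single half $\widehat{h}_f^{\pm}$. If one iterates $(\ast)$ only along the forward orbit of $z$, the analogous telescoping produces at best $\widehat{h}_f^+(z) \geq \frac{d^2}{d^2-1}\,h(z) - \frac{d}{d^2-1}\,h(f^{-1}(z)) + O(1)$; substituting the trivial upper bound $h\circ f^{-1} \leq d\,h + O(1)$ collapses the right-hand side to $O(1)$, yielding only the tautology $\widehat{h}_f(z) \geq -C$. Symmetrizing through $S_n$ treats $f$ and $f^{-1}$ on equal footing, which is exactly the structural feature of the divisor $D$, and extracts the correct quantity $\widehat{h}_f$ in the limit $N \to \infty$.
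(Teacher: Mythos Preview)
Your proof is correct and follows the same overall strategy as the paper: derive the key inequality $(\ast)$ from the effectivity of $D$ in Theorem~\ref{tm:Kawaguchi} via functoriality of heights, then iterate it symmetrically in $f$ and $f^{-1}$ so that the limit yields $\widehat{h}_f$.

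The only difference is in the iteration bookkeeping. The paper absorbs the constant by passing to the shifted height $h' := h - \tfrac{d^2}{(d-1)^2}C'$, so that $(\ast)$ becomes the clean inequality $d^{-1}h'(f(z)) + d^{-1}h'(f^{-1}(z)) \geq (1+d^{-2})h'(z)$, and then proceeds by a doubling trick along the subsequence $n=2^k$: applying the inequality at $f^{\pm 2^{k-1}}(z)$ and combining upgrades it to $d^{-2^k}\bigl(h'(f^{2^k}(z)) + h'(f^{-2^k}(z))\bigr) \geq (1+d^{-2^{k+1}})h'(z)$, and $k\to\infty$ gives $\widehat{h}_f \geq h'$ directly. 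Your route instead treats $S_n = h(f^n(z)) + h(f^{-n}(z))$ as a sequence satisfying a second-order linear recursion inequality, factors the characteristic polynomial $(x-d)(x-d^{-1})$, and telescopes over all $n$. Both are standard ways to extract the dominant eigenvalue $d$ from the recursion $S_{n+1}+S_{n-1}\geq (d+d^{-1})S_n - O(1)$; the paper's doubling is slightly slicker in that the constant disappears up front and no separate control of $U_1-dU_0$ is needed, while your approach makes the linear-algebra structure more transparent.
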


\begin{proof}
	We follow Kawaguchi's proof: By definition of the $\mathbb{Q}$-divisor $D$, we have
	\[h_{V,D}=h_{V,\psi_+^*H_\infty}+h_{V,\psi-^*H_\infty}-\left(d+\frac{1}{d}\right)h_{V,\pi^*H_\infty}+O(1).\]
	According to Theorem~\ref{tm:Kawaguchi}, $D$ is effective. Since $\pi(\mathrm{supp}(D))\subset\mathrm{supp}(H_\infty)$, we have 
	$h_{V,D}\geq O(1)$ on $\mathbb{A}^2(\bar{\mathbf{K}})$.
	Pick now $z\in\mathbb{A}^2(\bar{\mathbf{K}})$. As $\pi$ restricts as an isomorphism on $Y:=\pi^{-1}(\mathbb{A}^2)$, there is a unique $\tilde{z}\in Y(\bar{\mathbf{K}})$ with $\pi(\tilde{z})=z$ and the definition of $\psi_{\pm}$ gives
	\[h_{V,\psi_{\pm}^*H_\infty}(\tilde{z})=h_{\mathbb{P}^2,H_\infty}(\psi_{\pm}(\tilde{z}))+O(1)=h(f^{\pm 1}(z))+O(1).\]
	Similarly, we have $h_{V,\pi^*H_\infty}(\tilde{z})=h(z)+O(1)$. To summarize, we proved
	\begin{equation*}
	h(f(z))+h(f^{-1}(z))-\left(d+\frac{1}{d}\right)h(z)\geq -C,
	\end{equation*}
	for some $C>0$ independent of $z$. In particular, for any $z\in\mathbb{A}^2(\bar{\mathbf{K}})$, we have
	\begin{equation}
	\frac{1}{d}h(f(z))+\frac{1}{d}h(f^{-1}(z))-\left(1+\frac{1}{d^2}\right)h(z)\geq -C'\label{eq-goodineq}
	\end{equation}
	for some $C'>0$ independent of $z$. Set $h':=h-\frac{d^2}{(d-1)^2}C'$ in~\eqref{eq-goodineq} so that it becomes:
	\begin{equation*}
	\frac{1}{d}h'(f(z))+\frac{1}{d}h'(f^{-1}(z))\geq \left(1+ \frac{1}{d^2}\right)h(z).
	\end{equation*}
	Applying  to $f(z)$ yields
	\[\frac{1}{d^2}h'(f^2(z))+\frac{1}{d^2}h'(z)\geq \frac{1}{d} \left(1+\frac{1}{d^2}\right)h'(f(z)).\]
	Exchanging the roles of $f$ and $f^{-1}$, we have the same inequality for $f^{-1}$ and we deduce 
	\[\frac{1}{d^2}h'(f^2(z))+\frac{1}{d^2}h'(f^{-2}(z))\geq \left(1+\frac{1}{d^4}\right)h'(z).\]
	An easy induction gives
	\[\frac{1}{d^{2^k}}h'(f^{2^k}(z))+\frac{1}{d^2}h'(f^{-2^k}(z))\geq \left(1+\frac{1}{d^{2^{k+1}}}\right)h'(z).\]
	Making $k\to +\infty$, we find $\widehat{h}_f\geq h'$, as expected.
\end{proof}

\subsection{Stability, bifurcation measure and the canonical height.}
Let us come back to the case where $\mathbf{K}=\C(\mathcal{B})$ is the field of rational functions over a smooth complex projective curve.  We keep conventions and notations of Section~\ref{sec:henon}.

Recall that we defined an embedding $\iota_1:\mathcal{B}\hookrightarrow\p^{M}$ with $\iota_1^{-1}(\mathbb{A}^{M}(\mathbb{C}))=\Lambda$. Let $\omega_{\mathrm{FS},M}$ denote the Fubini-Study form on $\p^{M}$ and $\omega_\mathcal{B}:=\iota_1^*(\omega_{\mathrm{FS},M})$.
Let also $\pi_{\p^2}: \p^2\times \mathcal{B}\to \p^2$ be the  canonical projections, $\omega_{\mathrm{FS},2}$ the Fubini-Study form on $\p^2$ and $\widehat{\omega}:=\pi_{\p^2}^*(\omega_{\mathrm{FS},2})$. Note that the closed positive $(1,1)$-form
\[\widehat{\omega} +\pi^*\omega_\mathcal{B}\]
is a K\"ahler form on $\p^2\times\mathcal{B}$, which is cohomologous to the ample line bundle 
$\mathcal{M}:=\mathcal{L}\otimes\mathcal{N}$ on $\p^2\times\mathcal{B}$,
where $\mathcal{L}:=\pi_{\p^2}^*\mathcal{O}_{\p^2}(1)$ and $\mathcal{N}:=\iota_1^*\mathcal{O}_{\p^N}(1)$.
For any closed positive $(p,p)$-current $S$ on $\p^2\times\mathcal{B}$, and any Borel subset $\Omega$ of $\p^2\times\mathcal{B}$, we set
\[\|S\|_\Omega:=\int_\Omega S\wedge \left(\widehat{\omega}+\pi^*\omega_\mathcal{B}\right)^{3-p}.\]

\begin{definition}
	Consider $(\Lambda,\mathcal{f},\mathcal{Z})$ an algebraic dynamical pair of regular automorphism-type. We say that $(\Lambda,\mathcal{f},\mathcal{Z})$ is \emph{stable} if for any compact subset $K\Subset \p^2\times\Lambda$, we have $\|(\mathcal{f}^{n})_*[\mathcal{Z}]\|_K=O(1)$ as $n\to+\infty$.
\end{definition}
\begin{remark}\label{rm:Bishop}\normalfont
	Let us emphasize that $(\mathcal{f}^n)_*[\mathcal{Z}]=[\mathcal{f}^n(\mathcal{Z})]$, so that, when $(\Lambda,\mathcal{f},\mathcal{Z})$ is stable, the sequence $(\mathcal{f}^n(\mathcal{Z}))_n$ of analytic subsets has locally uniformly bounded mass and, by Bishop Theorem, converges up to extraction to an analytic subset $\mathcal{Z}_\infty$ of $\C^2\times\Lambda$. If in addition $\mathcal{Z}$ is the graph of a morphism $a:\Lambda\to\C^2$, then $(\Lambda,\mathcal{f},\mathcal{Z})$ is stable if and only if the sequence $(\lambda\mapsto f_\lambda^n(a(\lambda)))_n$ converges locally uniformly to a map $a_\infty:\Lambda\to\C^2$. This justifies the definition of stability.
\end{remark}

We now characterize stability in terms of a measure on the parameter space $\Lambda$. To do so, we let
\begin{definition}
	Let $(\Lambda,\mathcal{f},\mathcal{Z})$ be an algebraic dynamical pair of regular automorphism-type. The \emph{bifurcation measure} $\mu_{\mathcal{f},[\mathcal{Z}]}$ of $(\Lambda,\mathcal{f},\mathcal{Z})$ is the positive measure on $\Lambda$
	\[\mu_{\mathcal{f},[\mathcal{Z}]}:=\pi_*\left(\widehat{T}_\mathcal{f}\wedge[\mathcal{Z}]\right).\]
\end{definition}
Remark that, since $\widehat{T}_\mathcal{f}$ has continuous potentials on $(\p^2\setminus\{I^+\})\times \Lambda$, the measure $\mu_{\mathcal{f},[\mathcal{Z}]}$ is well-defined. 

~ 

We now come to the proof of the following, which says that stability is equivalent to having bounded degree under iteration. Before starting the proof, we recall that, since $\mathcal{Z}$ has relative dimension $0$, for any integer $n\in\mathbb{Z}$, we have
\[(\mathcal{f}^n)_*[\mathcal{Z}]=[\mathcal{f}^n(\mathcal{Z})]\]
and that, by definition of $\deg_\mathcal{M}$ and of the mass of a current, we have
\[\|[\mathcal{f}^n(\mathcal{Z})]\|_{\C^2\times\Lambda}=\|[\mathcal{f}^n(\mathcal{Z})]\|_{\p^2(\C)\times\mathcal{B}}=\deg_\mathcal{M}(\mathcal{f}^n(\mathcal{Z})).\]

\begin{proposition}\label{Stable=bounded-henon}
Let $(\Lambda,\mathcal{f},\mathcal{Z})$ be an algebraic dynamical pair of regular automorphism-type. There exists a constant $B>0$ depending only on $(\Lambda,\mathcal{f},\mathcal{Z})$ such that for any $n\geq1$,
\[\left|\deg_\mathcal{M}(\mathcal{f}^n(\mathcal{Z}))-d^n\int_\Lambda\mu_{\mathcal{f},[\mathcal{Z}]}\right|\leq B.\]
\end{proposition}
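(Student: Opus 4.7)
My strategy is to split $\deg_\mathcal{M}(\mathcal{f}^n(\mathcal{Z}))$ according to the two summands of the K\"ahler class $\widehat\omega+\pi^*\omega_\mathcal{B}$, apply the projection formula over the regular part $\Lambda$ (where $\mathcal{f}^n$ is a biholomorphism), use the degeneracy estimate of Lemma~\ref{goodgrowth2} to extract the bifurcation-mass main term, and bound the resulting error by Stokes' theorem on the compact Riemann surface $\mathcal{Z}$.

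First I would write
\[\deg_\mathcal{M}(\mathcal{f}^n(\mathcal{Z}))=\int_{\p^2\times\mathcal{B}}[\mathcal{f}^n(\mathcal{Z})]\wedge\widehat{\omega}+\int_{\p^2\times\mathcal{B}}[\mathcal{f}^n(\mathcal{Z})]\wedge\pi^*\omega_\mathcal{B}.\]
Because $\mathcal{f}$ is a biholomorphism over $\Lambda$ fibered over $\mathcal{B}$, the strict transform $\mathcal{f}^n(\mathcal{Z})$ is irreducible with $\pi|_{\mathcal{f}^n(\mathcal{Z})}$ of the same degree as $\pi|_\mathcal{Z}$, so the second integral equals the $n$-independent constant $\deg(\pi|_\mathcal{Z})\int_\mathcal{B}\omega_\mathcal{B}$, which I absorb into $B$. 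Since $\mathcal{f}^n(\mathcal{Z})$ has no vertical components, the first integral equals $\int_{\p^2\times\Lambda}[\mathcal{f}^n(\mathcal{Z})]\wedge\widehat{\omega}$, which the projection formula for the biholomorphism $\mathcal{f}^n|_{\p^2\times\Lambda}$ rewrites as $\int_{\p^2\times\Lambda}[\mathcal{Z}]\wedge(\mathcal{f}^n)^*\widehat{\omega}$. Lemma~\ref{goodgrowth2} then gives $(\mathcal{f}^n)^*\widehat\omega=d^n\widehat{T}_\mathcal{f}+dd^c\psi_n$ with $\psi_n:=d^n\phi_n$ satisfying the $n$-independent bound
\[|\psi_n(x,y,\lambda)|\leq C_1\log^+|\lambda|+C_2\log^+\frac{1}{\|d((x,y),I^+)\|}+C_3,\]
so that
\[\int_{\p^2\times\Lambda}[\mathcal{Z}]\wedge(\mathcal{f}^n)^*\widehat{\omega}=d^n\int_\Lambda\mu_{\mathcal{f},[\mathcal{Z}]}+\int_{\mathcal{Z}\cap(\p^2\times\Lambda)}dd^c\psi_n.\]
It then remains to bound $\bigl|\int_{\mathcal{Z}\cap(\p^2\times\Lambda)}dd^c\psi_n\bigr|$ independently of $n$.

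For this bound, observe that $\mathcal{Z}\cap(\p^2\times\Lambda)\subset\C^2\times\Lambda$ avoids $I^+$ on each regular fiber, so $\psi_n|_\mathcal{Z}$ is locally bounded on $\mathcal{Z}\cap(\p^2\times\Lambda)$ with at most logarithmic singularities at the finite set $\{p_1,\ldots,p_m\}=\mathcal{Z}\cap(\p^2\times(\mathcal{B}\setminus\Lambda))$. In a local coordinate $\zeta$ on $\mathcal{Z}$ at each $p_i$, the pointwise estimate above together with the local intersection data of $\mathcal{Z}$ with the divisors $\{\lambda=\lambda_\star\}$ and $\{(x,y)=I^+\}$ forces the Lelong-type coefficient $c_i^{(n)}$ of $\psi_n|_\mathcal{Z}$ at $p_i$ to satisfy $|c_i^{(n)}|\leq A_i$ for constants $A_i$ independent of $n$. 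Since $\mathcal{Z}$ is a compact Riemann surface without boundary, Stokes' theorem gives $\int_\mathcal{Z} dd^c\psi_n=0$ when $\psi_n$ is viewed as a distribution on $\mathcal{Z}$, whence
\[\int_{\mathcal{Z}\cap(\p^2\times\Lambda)}dd^c\psi_n=-\sum_{i=1}^m c_i^{(n)},\]
which is bounded in absolute value by $\sum_i A_i$, independent of $n$.

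The main obstacle is making this Stokes/log-coefficient argument rigorous, since $\psi_n$ is only a DSH function (the difference of the two $\widehat\omega$-psh potentials, of $d^{-n}(\mathcal{f}^n)^*\widehat\omega$ and of $\widehat{T}_\mathcal{f}$), not smooth. I would handle this by standard regularization, approximating each qpsh potential by a decreasing sequence of smooth psh functions, invoking continuity of the wedge product $[\mathcal{Z}]\wedge dd^c(\cdot)$ along the approximation, and reading off the Lelong-type coefficient $c_i^{(n)}$ at each puncture directly from the pointwise $L^\infty$ bound on $\psi_n$ supplied by Lemma~\ref{goodgrowth2}.
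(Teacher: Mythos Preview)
Your overall plan---split the degree, use the projection formula over $\C^2\times\Lambda$, invoke Lemma~\ref{goodgrowth2} to extract $d^n\int_\Lambda\mu_{\mathcal{f},[\mathcal{Z}]}$, and bound the remainder $\int_{\mathcal{Z}_{\mathrm{open}}}dd^c\psi_n$---is a legitimate alternative to the paper's argument. The paper does not integrate $dd^c\psi_n$ over $\mathcal{Z}$ directly; instead it pairs the signed current $[\mathcal{Z}]\wedge(\widehat T_\mathcal{f}-d^{-n}(\mathcal f^n)^*\widehat\omega)$ against a DSH \emph{logarithmic} cutoff $\Psi_A\circ\pi$ on the base, so that after Stokes one is left with $\langle\phi_n[\mathcal{Z}],dd^c(\Psi_A\circ\pi)\rangle$. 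The point is that $dd^c\Psi_A$ has mass $O(1/A)$ while $\sup|\phi_n|$ on its support is $O(A/d^n)$ (using algebraicity of $\mathcal{Z}$ to convert the $\log^+1/d(\cdot,I^+)$ term into $O(A)$), so the product is $O(d^{-n})$ uniformly in $A$; letting $A\to\infty$ finishes. This is precisely engineered so that the pointwise $L^\infty$ bound from Lemma~\ref{goodgrowth2} is the only input needed.

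Your route, by contrast, has a genuine gap at the boundary step. The assertion that the pointwise bound $|\psi_n(\zeta)|\le A_i|\log|\zeta||$ ``forces'' $|c_i^{(n)}|\le A_i$ is not a consequence of that bound alone, and ``standard regularization'' does not address it: smoothing $\psi_n$ does not by itself control the point mass of $dd^c\tilde\psi_n$ at a puncture. Indeed, for a general $L^1$ function on $D$ with $|\psi|\le A|\log|\zeta||$, pairing against a rescaled smooth bump $\chi(\cdot/\epsilon)$ gives $|\langle dd^c\tilde\psi,\chi_\epsilon\rangle|\lesssim A\log(1/\epsilon)$, which diverges. What makes the claim true here is extra structure you have not used: near each puncture, $u^+:=\tfrac12\log(1+|f_\lambda^n|^2)|_\mathcal{Z}$ is exactly $k_n\log(1/|\zeta|)+O(1)$ for an integer $k_n$ (pole order of a meromorphic map), so $u^++k_n\log|\zeta|$ extends subharmonically across $0$ with Lelong number $0$; then the two-sided bound on $\psi_n$ forces $u^-+k_n\log|\zeta|$ to extend subharmonically with Lelong number in $[0,2A_i]$, and the Lelong-number comparison gives $|c_i^{(n)}|\le A_i$. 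Alternatively, you can recover the bound by testing $\tilde\psi_n$ against a \emph{log}-cutoff at each puncture---but that is exactly the paper's mechanism transplanted onto $\mathcal{Z}$. Either way, the missing ingredient is not regularization but a Lelong-number/log-cutoff argument; once you supply it, your proof goes through.
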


\begin{proof}
We adopt the same strategy as in \cite{GV_Northcott} using Lemma~\ref{goodgrowth2} to deal with the indeterminacy set. 
For any $A>0$, we pick the following test function 
\[
\Psi_A(\lambda):=  \frac{ \log \max (\|\lambda\|,e^A)- \log \max (\|\lambda\|,e^{2A})}{A} ,\quad\text{for all} \ \lambda\in\Lambda.
\]
Then, $\Psi_A$ is continuous and DSH on $\Lambda$, i.e. $dd^c \Psi_A=T_A^+-T_A^-$
where $T^\pm_A$ are some positive closed $(1,1)$-currents whose masses are finite with $\|T^\pm_A \|\leq C'/A$ for some $C'>0$ depending neither on $A$ nor on $T^\pm_A$. Observe also that $\Psi_A$ is equal to $-1$ in $B(0,e^A)$, and $0$ outside $B(0,e^{2A})$. Since $ \pi\circ \mathcal{f}^{n}= \pi$, we have
\begin{align*}
I^A_n & := \left\langle\frac{1}{d^{n}}(\mathcal{f}^{n})_*[\mathcal{Z}]\wedge\left(\widehat{T}_\mathcal{f}-\widehat{\omega} \right),\Psi_A\circ \pi \right\rangle\\
& =\left\langle [\mathcal{Z}]\wedge\frac{1}{d^{n}}(\mathcal{f}^{n})^{*}\left(\widehat{T}_\mathcal{f}-\widehat{\omega}\right),\Psi_A\circ \pi\circ \mathcal{f}^{n}\right\rangle\\
& = \left\langle [\mathcal{Z}]\wedge\left( \widehat{T}_\mathcal{f}-\frac{1}{d^n}(\mathcal{f}^n)^*\left(\widehat{\omega}\right)\right),\Psi_A\circ \pi\right\rangle,
\end{align*}
where we used $(\mathcal{f}^n)^*\widehat{T}_\mathcal{f}=d^n\cdot \widehat{T}_\mathcal{f}$. By Stokes formula  and using the notations of Lemma~\ref{goodgrowth2}.
\begin{align*}
I^A_n & = \left\langle \phi_n \cdot [\mathcal{Z}],dd^{c}(\Psi_A\circ \pi)\right\rangle.
\end{align*}
 In particular, by the properties of $T^\pm_A$, there exists a constant $B>0$ such that by B\'ezout:
 \begin{align*}
|I^A_n|& \leq \frac{B \|[\mathcal{Z}\|_{\C^2\times\Lambda}}{A d^n}\sup_{   \pi^{-1}(B(0,e^{2A}) \cap \mathcal{Z})}|\phi_n|.
\end{align*}
We now use Lemma \ref{goodgrowth2}. As $\mathcal{C}$ is an algebraic curve, when $A$ is large, we have that 
\[\sup_{\pi^{-1}(B(0,e^{2A}) \cap\mathcal{Z})} d((x,y),I^+)\geq |A|^{-\zeta} \]
for a constant $\zeta$ that does not depend on $A$ (though $\zeta$ depends on $\mathcal{Z}$). This  implies $|I^A_n|\leq B d^{-n}$ where $B$ is a constant that depends on $(\Lambda,\mathcal{f},\mathcal{Z})$ but neither on $A$ nor on $n$. We make $A\to\infty$ and multiply by $d^n$ and find
\[\left|\int_{\C^2\times\Lambda}(\mathcal{f}^{n})_*[\mathcal{Z}]\wedge\widehat{\omega}-d^{n}\int_{\Lambda} \pi_*\left([\mathcal{Z}]\wedge \widehat{T}_\mathcal{f}\right)\right|\leq B .\]
On the other hand, as $ \pi_\Lambda\circ \mathcal{f}^{n}= \pi_\Lambda$ for all $n$, we see that
\begin{align*}
\int_{\C^2\times\Lambda}(\mathcal{f}^{n})_*[\mathcal{Z}]\wedge (\pi^*\omega_\mathcal{B})  =\int_{\C^2\times\Lambda}[\mathcal{Z}]\wedge (\pi^*\omega_\mathcal{B})
 \leq  B' \|[\mathcal{Z}]\|_{\C^2\times\Lambda}
\end{align*}
where $B'$ is a constant that does not depend on $n$. This ends the proof.
\end{proof}

This global estimate of mass allows us to give the following different global characterizations of stability.   
\begin{proposition}\label{prop:global-stable}
Let $(\Lambda,f,\mathcal{Z})$ be an algebraic dynamical pair of regular polynomial automorphism-type.
The following assertions are equivalent:
\begin{enumerate}
\item $(\Lambda,\mathcal{f},\mathcal{Z})$ is stable,
\item the sequence $(\deg_{\mathcal{M}}(\mathcal{f}^n(\mathcal{Z})))_{n\geq1}$ is bounded, 
\item the function $G^+_\mathcal{Z}$ is constant on $\Lambda$ where
\[G_\mathcal{Z}^+:\lambda\in\Lambda\longmapsto \sum_{(x,y)\in\mathcal{Z}\cap\C^2\times\{\lambda\}}G^+_\lambda(x,y)\in\R_+.\]
\end{enumerate}
\end{proposition}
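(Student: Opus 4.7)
The plan is to establish the cycle $(3)\Rightarrow(2)\Rightarrow(1)\Rightarrow(2)$ together with the implication $(2)\Rightarrow(3)$. The backbone is Proposition~\ref{Stable=bounded-henon}: since $\mu_{\mathcal{f},[\mathcal{Z}]}$ is positive and $\deg_\mathcal{M}(\mathcal{f}^n(\mathcal{Z}))=d^n\int_\Lambda\mu_{\mathcal{f},[\mathcal{Z}]}+O(1)$, the sequence of degrees is bounded if and only if $\mu_{\mathcal{f},[\mathcal{Z}]}$ vanishes on $\Lambda$. The implications $(3)\Rightarrow(2)$ and $(2)\Rightarrow(1)$ are then immediate: if $G^+_\mathcal{Z}$ is constant on $\Lambda$ then $dd^c G^+_\mathcal{Z}=0=\mu_{\mathcal{f},[\mathcal{Z}]}|_\Lambda$ (using the identity $\mu_{\mathcal{f},[\mathcal{Z}]}|_\Lambda=dd^c G^+_\mathcal{Z}$ described below), while positivity gives $\|(\mathcal{f}^n)_*[\mathcal{Z}]\|_K\leq\|(\mathcal{f}^n)_*[\mathcal{Z}]\|_{\mathbb{P}^2\times\mathcal{B}}=\deg_\mathcal{M}(\mathcal{f}^n(\mathcal{Z}))$ for every compact $K\subset\mathbb{P}^2\times\Lambda$.

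For $(1)\Rightarrow(2)$, I test $\mu_{\mathcal{f},[\mathcal{Z}]}$ against a non-negative smooth function $\phi$ compactly supported in $\Lambda$. Writing $\widehat{T}_\mathcal{f}=d^{-n}(\mathcal{f}^n)^*\widehat{\omega}-dd^c\phi_n$ from Lemma~\ref{goodgrowth2}, using the invariance $\pi\circ\mathcal{f}=\pi$ (which makes $\phi\circ\pi$ invariant under $\mathcal{f}^n$) and the projection formula, one gets
\[ \int\phi\, d\mu_{\mathcal{f},[\mathcal{Z}]} = d^{-n}\int(\phi\circ\pi)\,\widehat{\omega}\wedge(\mathcal{f}^n)_*[\mathcal{Z}] - \int\phi_n\, dd^c(\phi\circ\pi)\wedge[\mathcal{Z}]. \]
The first term is bounded by $C\,d^{-n}\|(\mathcal{f}^n)_*[\mathcal{Z}]\|_{\mathrm{supp}(\phi\circ\pi)}=O(d^{-n})$ by stability, and the second is $O(d^{-n})$ because on $\mathrm{supp}(\phi\circ\pi)\cap\mathcal{Z}$ (which stays away from $I^+$ and has $|\lambda|$ bounded) Lemma~\ref{goodgrowth2} gives $|\phi_n|=O(d^{-n})$. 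Letting $n\to\infty$ yields $\int\phi\, d\mu_{\mathcal{f},[\mathcal{Z}]}=0$ for every such $\phi$, hence $\mu_{\mathcal{f},[\mathcal{Z}]}|_\Lambda=0$, and Proposition~\ref{Stable=bounded-henon} provides $(2)$.

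The crux is $(2)\Rightarrow(3)$. From $\mu_{\mathcal{f},[\mathcal{Z}]}|_\Lambda=0$ and the identity $\mu_{\mathcal{f},[\mathcal{Z}]}|_\Lambda=dd^c G^+_\mathcal{Z}$ (obtained by writing $\widehat{T}_\mathcal{f}|_{\mathbb{C}^2\times\Lambda}=dd^c G^+$ with $G^+(x,y,\lambda):=G^+_\lambda(x,y)$ continuous, pushing forward along the proper map $\pi|_\mathcal{Z}:\mathcal{Z}\to\mathcal{B}$, and using that $dd^c$ commutes with proper pushforward), the function $G^+_\mathcal{Z}$ is harmonic on $\Lambda$. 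To upgrade harmonicity to constancy I work on the compact Riemann surface $\mathcal{B}$. Write $\mathcal{B}\setminus\Lambda=\{\lambda_1,\ldots,\lambda_m\}$ with a local coordinate $t_j$ at $\lambda_j$. The estimate from Lemma~\ref{goodgrowth2} gives $|G^+(x,y,\lambda)|\leq \log^+\|(x,y)\|+C_1\log^+|\lambda|+C_2\log^+1/d((x,y),I^+)+C_3$ on $\mathbb{C}^2\times\Lambda$, and since $\mathcal{Z}$ is algebraic, its coordinates grow at most polynomially in $|t_j|^{-1}$ near each $\lambda_j$, so that $|G^+_\mathcal{Z}(t_j)|=O(\log|t_j|^{-1})$. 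By the Fourier-series expansion of a harmonic function on a punctured disk with at most logarithmic growth, this forces $G^+_\mathcal{Z}(t_j)=a_j\log|t_j|+h_j(t_j)$ with $h_j$ extending harmonically across $\lambda_j$. Since $G^+_\mathcal{Z}\geq 0$ (sum of non-negative fiberwise Green functions), $a_j>0$ is impossible (otherwise $a_j\log|t_j|\to-\infty$ while $h_j$ remains bounded), so $a_j\leq 0$. On the other hand, Stokes on the closed surface $\mathcal{B}$ applied to the $L^1$ function $G^+_\mathcal{Z}$ yields $0=\int_\mathcal{B}dd^c G^+_\mathcal{Z}=\sum_j a_j$ (up to a positive normalization), because $dd^c G^+_\mathcal{Z}$ is supported on the $\lambda_j$ with atomic mass $a_j$. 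Combining $a_j\leq 0$ and $\sum_j a_j=0$ forces $a_j=0$ for all $j$, so $G^+_\mathcal{Z}$ extends to a harmonic function on the compact Riemann surface $\mathcal{B}$ and is therefore constant.

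The main obstacle is securing the $O(\log|t_j|^{-1})$ growth bound at each puncture, which is exactly where Lemma~\ref{goodgrowth2} and the algebraicity of $\mathcal{Z}$ (responsible for the polynomial growth of its coordinates near each $\lambda_j$) play a decisive role. Once this bound is in place, the remaining argument is a clean combination of the punctured-disk Fourier expansion, the sign constraint from positivity of $G^+_\mathcal{Z}$, and the compactness of $\mathcal{B}$ (which forces $\int_\mathcal{B}dd^c G^+_\mathcal{Z}=0$).
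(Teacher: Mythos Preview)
Your argument is correct and follows essentially the same route as the paper. The only notable differences are cosmetic: you spell out $(1)\Rightarrow(2)$ by testing $\mu_{\mathcal{f},[\mathcal{Z}]}$ against compactly supported functions (the paper simply invokes Proposition~\ref{Stable=bounded-henon} for $1\Leftrightarrow 2$), and in the endgame of $(2)\Rightarrow(3)$ you conclude constancy by extending $G^+_\mathcal{Z}$ harmonically to the compact curve $\mathcal{B}$, whereas the paper reaches the same conclusion via the maximum principle applied to $u=-G^+_\mathcal{Z}$.
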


\begin{proof}
We may view $\Lambda$ as an affine curve. According to Proposition~\ref{Stable=bounded-henon}, we have $\mu_{\mathcal{f},[\mathcal{Z}]}=0$ if and only if the sequence $(\deg_{\mathcal{M}}(\mathcal{f}^n(\mathcal{Z})))_{n\geq1}$ is bounded, so $1\iff 2$.

The implication $3\Rightarrow2$ follows from the fact that  $\mu_{f,[\mathcal{Z}]}=dd^cG^+_\mathcal{Z}$. Indeed, the current $\widehat{T}_\mathcal{f}$ satisfies $\widehat{T}_\mathcal{f}=dd^cG^+$ on $\C^2\times\Lambda$ so that
\[dd^cG^+_\mathcal{Z}=\widehat{T}_\mathcal{f}\wedge [\mathcal{Z}].\]
Finally, if $1$ holds, the function $G_\mathcal{Z}$ is harmonic on $\Lambda$. Applying Lemma~\ref{goodgrowth2} for $n=1$ gives
\[0\leq G^+_\lambda(x,y) \leq C_1\log^+|\lambda|+C_2\log^+\|(x,y)\|+C_3,\]
for all $((x,y),\lambda)\in \C^2\times \Lambda$. As $\mathcal{Z}$ is an algebraic curve of $\C^2\times\Lambda$ such that $C_\lambda:=\mathcal{Z}\cap(\C^2\times\{\lambda\})$ is finite for all $\lambda$, this implies 
\[0\leq G^+_\mathcal{Z}(\lambda)\leq C\log^+|\lambda|+C', \quad \lambda\in\Lambda.\]
Recall that $\mathcal{B}$ is a smooth compactification of $\Lambda$. Pick a branch at infinity $\mathfrak{c}\in\mathcal{B}\setminus\Lambda$ of $\Lambda$. If $t$ is a local coordinate at $\mathfrak{c}$. Then there is a constant $c_\mathfrak{c}\geq0$ such that $G_\mathcal{Z}^+(t)=c_\mathfrak{c}\log|t|^{-1}+o(\log|t|^{-1})$ and Stokes Theorem implies
\[\sum_{\mathfrak{c}\in\mathcal{B}\setminus\Lambda}c_\mathfrak{c}=\int_\mathcal{B}dd^cG^+_\mathcal{Z}=\int_{\p^2\times\mathcal{B}}\widehat{T}_\mathcal{f}\wedge [\mathcal{Z}]=0.\]
Coming back to a local parametrization of $\Lambda$ at some $\mathfrak{c}\in\mathcal{B}\setminus\Lambda$, we have $G^+_\mathcal{Z}(t)=o(\log|t|^{-1})$. Define now
\[u(\lambda):=-G^+_\mathcal{Z}(\lambda), \quad \lambda\in\Lambda.\]
The above implies that the function $u$ is subharmonic on $\Lambda$ with $u\leq0$ and whose usc extension to $\mathcal{B}$ is still subharmonic. By the maximum principle, it has to be constant.
\end{proof}

\subsection{Function field versus family}
Recall that, as explained in the introduction, a family (resp. non-isotrivial  family) $\mathcal{f}:\C^2\times\Lambda\to\C^2\times \Lambda$ of regular polynomial automorphisms as above can be associated with a dynamical system over the function field $\mathbf{K}=\C(\mathcal{B})$: it induces a regular polynomial automorphism $f:\mathbb{A}^2_\mathbf{K}\to\mathbb{A}^2_\mathbf{K}$ (resp. a non isotrivial regular polynomial automorphism $f:\mathbb{A}^2_\mathbf{K}\to\mathbb{A}^2_\mathbf{K}$) which is given by
\[f(x,y)=(p(x,y),q(x,y)),\]
where $p,q\in\mathbf{K}[x,y]$.

Moreover, if $(\Lambda,f,\mathcal{Z})$ is a dynamical pair, the curve $\mathcal{Z}$ corresponds to a finite algebraic subvariety $Z$ of $\mathbb{A}^2(\bar{\mathbf{K}})$. Similarly, if $Z$ is a finite algebraic subvariety of $\mathbb{A}^2(\mathbf{K})$ which is defined over $\mathbf{K}$, let $\mathcal{Z}$ be the Zariski closure of $Z$ in $\p^2(\C)\times\mathcal{B}$. This is a curve $\mathcal{Z}\subset\p^2(\C)\times\mathcal{B}$ which is flat over $\mathcal{B}$. Finally, to $z\in\mathbb{A}^2(\mathbf{K})$, one can also associate a marked point $\mathcal{z}:\mathcal{B}\to\C$ such that $\mathcal{z}$ is defined on $\Lambda$, in which case the subvariety $\mathcal{Z}$ corresponds to the closure of the graph of $\mathcal{z}$ restricted to $\Lambda$.  

As in the case of endomorphisms in \cite{GV_Northcott}, we can express the canonical height of $Z$ as the mass of the bifurcation measure.
\begin{proposition}\label{lm:height-henon}
Let $\mathbf{K}:=\C(\mathcal{B})$ be the field of rational functions of a smooth complex projective cuvre and let $(\Lambda,f,\mathcal{Z})$ be an algebraic dynamical pair of regular polynomial automorphism-typ with $\Lambda\subset\mathcal{B}$ and $f:\mathbb{A}^2_\mathbf{K}\to\mathbb{A}^2_\mathbf{K}$ be the induced regular polynomial automorphism and let $Z$ be the finite algebraic subvariety  of $\mathbb{A}^2(\mathbf{K})$ induced by $\mathcal{Z}$. Then
	\[\widehat{h}_f^+(Z)=\int_{\C^2\times\Lambda}[\mathcal{Z}]\wedge \widehat{T}_\mathcal{f} \quad \text{and} \quad \widehat{h}_f^-(Z)=\int_{\C^2\times\Lambda}[\mathcal{Z}]\wedge \widehat{T}_{\mathcal{f}^{-1}}.\]
	In particular, $\widehat{h}_f^+(Z)=0$ if and only if $(\Lambda,\mathcal{f},\mathcal{Z})$ is stable. 
	
	Furthermore, there exists a constant $B_0\geq0$ depending only on $f$ such that, if $z\in\mathbb{A}^2(\mathbf{K})$ corresponds to the marked point $\mathcal{z}:\mathcal{B}\dashrightarrow\mathbb{P}^2$ and $\mathcal{Z}$ is the closure of the graph of $\mathcal{z}$ restricted to $\Lambda$, then 
\begin{center}
$\hat{h}_f(z)=0 $ if and only if for any $n\in\mathbb{Z}$, we have $\deg_{\mathcal{M}}(\mathcal{f}^n(\mathcal{Z}))\leq B_0$.
\end{center}\end{proposition}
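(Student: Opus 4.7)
The plan is to derive everything from Proposition~\ref{Stable=bounded-henon}, Proposition~\ref{prop:global-stable} and the height comparison of Corollary~\ref{cor-goodineq}, using the standard geometric interpretation of the height in the function field setting. In our situation, for a finite subvariety $Z\subset\mathbb{A}^2(\bar{\mathbf{K}})$ with flat model $\mathcal{Z}\subset\p^2\times\mathcal{B}$, one has $h(Z)=\deg_{\mathcal{L}}(\mathcal{Z})+O(1)=\int[\mathcal{Z}]\wedge\widehat{\omega}+O(1)$, and because $\pi\circ\mathcal{f}^n=\pi$ the complementary piece $\int[\mathcal{f}^n(\mathcal{Z})]\wedge\pi^*\omega_{\mathcal{B}}$ does not depend on $n$. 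Applying this to $f^n(Z)$ and inserting it into Proposition~\ref{Stable=bounded-henon} yields
\[\frac{h(f^n(Z))}{d^n}=\int_\Lambda\mu_{\mathcal{f},[\mathcal{Z}]}+O(d^{-n}),\]
so letting $n\to+\infty$ gives $\widehat{h}_f^+(Z)=\int_{\C^2\times\Lambda}[\mathcal{Z}]\wedge\widehat{T}_{\mathcal{f}}$; the formula for $\widehat{h}_f^-(Z)$ follows by the same argument applied to $\mathcal{f}^{-1}$, which is also a family of regular polynomial automorphisms.

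The equivalence $\widehat{h}_f^+(Z)=0\iff(\Lambda,\mathcal{f},\mathcal{Z})$ stable is then immediate: by the formula just derived, $\widehat{h}_f^+(Z)$ is the total mass of the positive measure $\mu_{\mathcal{f},[\mathcal{Z}]}$, so its vanishing is equivalent to $\mu_{\mathcal{f},[\mathcal{Z}]}=0$, which is exactly the content of Proposition~\ref{prop:global-stable}.

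For the final assertion about a marked point $z\in\mathbb{A}^2(\mathbf{K})$, I would proceed as follows. If $\widehat{h}_f(z)=0$, then $\widehat{h}_f^\pm(z)=0$ since both are non-negative, and the functional equations $\widehat{h}_f^\pm\circ f=d^{\pm1}\widehat{h}_f^\pm$ give
\[\widehat{h}_f(f^n(z))=d^n\widehat{h}_f^+(z)+d^{-n}\widehat{h}_f^-(z)=0\quad\text{for all }n\in\Z.\]
Corollary~\ref{cor-goodineq} then produces a constant $C>0$, depending only on $f$, such that $h(f^n(z))\leq C$ for every $n\in\Z$. Since $\mathcal{Z}$ is the graph of a morphism, $\pi|_{\mathcal{Z}}$ has degree one and is preserved by $\mathcal{f}$, which gives $\deg_{\mathcal{M}}(\mathcal{f}^n(\mathcal{Z}))=h(f^n(z))+c_\mathcal{B}$ with $c_\mathcal{B}$ depending only on $\mathcal{B}$; hence $\deg_\mathcal{M}(\mathcal{f}^n(\mathcal{Z}))\leq B_0$ for a constant depending only on $f$. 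Conversely, if all $\deg_{\mathcal{M}}(\mathcal{f}^n(\mathcal{Z}))$ are bounded, then both $(\Lambda,\mathcal{f},\mathcal{Z})$ and $(\Lambda,\mathcal{f}^{-1},\mathcal{Z})$ are stable by the first part, so $\widehat{h}_f^\pm(z)=0$ and $\widehat{h}_f(z)=0$. The only subtle point, and the main obstacle, is the uniform independence of $B_0$ from $z$: this is exactly what Kawaguchi's comparison $\widehat{h}_f\geq h-C$ guarantees, which is the reason Corollary~\ref{cor-goodineq} is indispensable here and cannot be replaced by merely working with the $\widehat{h}_f^+$ estimate alone.
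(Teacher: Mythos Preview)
Your proposal is correct and follows essentially the same approach as the paper: deduce the height formula from Proposition~\ref{Stable=bounded-henon} by writing $h(f^n(Z))=\int[\mathcal{f}^n(\mathcal{Z})]\wedge\widehat{\omega}$ and passing to the limit, then use Proposition~\ref{prop:global-stable} for the stability equivalence and Corollary~\ref{cor-goodineq} together with the identity $\deg_{\mathcal{M}}(\mathcal{Z})=h(z)+1$ for the uniform degree bound. Your write-up is in fact slightly more complete than the paper's, since you spell out the converse direction of the final ``if and only if'' and the functional equation step $\widehat{h}_f(f^n(z))=0$.
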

\begin{proof}
	The expression of the height is an application of Proposition~\ref{Stable=bounded-henon}. Indeed, for any $n\geq1$, we have
	\[\frac{1}{d^n}h(f^n(Z))=\frac{1}{d^n}\int_{\C^2\times\Lambda}[\mathcal{f}^n(\mathcal{Z})]\wedge\widehat{\omega}=\int_{\C^2\times\Lambda}[\mathcal{Z}]\wedge \widehat{T}_\mathcal{f}+O(d^{-n})\]
	by Proposition~\ref{Stable=bounded-henon}. We conclude letting $n\to+\infty$. Applying everything for $f^{-1}$ instead of $f$ gives the expected formulae.
	The equivalence between height zero and stability follows from Proposition~\ref{prop:global-stable}.
	
	Now, apply Corollary~\ref{cor-goodineq} to $\mathbf{K}=\C(\mathbb{B})$ and $z\in \mathbb{A}^2(\mathbf{K})$:
	\[h(z)\leq \hat{h}_f(z)+C,\]
	where $C$ does not depend on $z$. In particular, if $ \hat{h}_f(z)=0$, we deduce for all $n \in \Z$
		\[h(f^n(z))\leq \hat{h}_f(f^n(z))+C = C.\] 
	Let $\mathcal{N}$ be an ample linebundle on $\mathcal{B}$, be such that $\mathcal{M}:=\pi_1^*\mathcal{O}_{\mathbb{P}^1}(1)\otimes\pi_2^*(\mathcal{N})$ is ample on $\mathbb{P}^2\times\mathcal{B}$. By construction, for any $z\in\mathbb{A}^2(\mathbf{K})$ with Zariski closure $\mathcal{Z}$ in $\mathbb{P}^2\times\mathcal{B}$, we have
	\[h(z)=\left(\mathcal{Z}\cdot c_1(\pi_1^*\mathcal{O}_{\mathbb{P}^2}(1)\right)=\deg_{\mathcal{M}}(\mathcal{Z})-\left(\mathcal{Z}\cdot c_1(\pi_2^*\mathcal{N})\right)=\deg_{\mathcal{M}}(\mathcal{Z})-1.\]
	 If $\mathcal{Z}_n$ is the Zariski closure of $f^n(z)$ for all $n\in \Z$, we find
	\[\deg_{\mathcal{M}}(\mathcal{Z}_n)\leq C+1, \quad \text{for all} \ n\in\mathbb{Z}.\]
	Taking $B_0=C+1$ ends the proof.
\end{proof}

\section{The Geometric Dynamical Northcott Property}

\subsection{Stability implies periodicity and the Main Theorem}

We prove the following rigidity theorem which is concerned with stable dynamical pairs of regular polynomial automorphism type, and then deduce the Main Theorem from it:

\begin{theorem}\label{tm:henon}
Let $(\Lambda,\mathcal{f},\mathcal{Z})$ be a non-isotrivial algebraic dynamical pair of regular polynomial automorphism-type, where $\mathcal{Z}$ is the graph of a marked point $\Lambda\to\C^2$. The following assertions are equivalent:
\begin{enumerate}
\item the pair $(\Lambda,\mathcal{f},\mathcal{Z})$ is stable,
\item the pair $(\Lambda,\mathcal{f}^{-1},\mathcal{Z})$ is stable,
\item  there exists $n>0$ such that $\mathcal{f}^{n}(\mathcal{Z})=\mathcal{Z}$.
\end{enumerate}
\end{theorem}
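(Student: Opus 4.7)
The implications $(3) \Rightarrow (1)$ and $(3) \Rightarrow (2)$ are immediate: if $\mathcal{f}^n(\mathcal{Z}) = \mathcal{Z}$ for some $n \geq 1$, the orbit of $\mathcal{Z}$ under $\mathcal{f}^{\pm 1}$ is a finite union of algebraic curves in $\C^2 \times \Lambda$, and its forward and backward iterates trivially have uniformly bounded mass on compact sets. Since $\mathcal{f}^{-1}$ is itself an algebraic family of regular polynomial automorphisms, $(2) \Rightarrow (3)$ reduces to $(1) \Rightarrow (3)$ applied to $\mathcal{f}^{-1}$, so it suffices to establish $(1) \Rightarrow (3)$.

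Assume $(\Lambda, \mathcal{f}, \mathcal{Z})$ is stable. By Proposition~\ref{prop:global-stable} there is a uniform bound $\deg_\mathcal{M}(\mathcal{f}^n(\mathcal{Z})) \leq B$ for all $n \geq 0$, and the stability of the graph $\mathcal{Z}$ forces $G^+_\lambda(\mathcal{Z}(\lambda)) = 0$ for every $\lambda \in \Lambda$, so the fibers of each iterate $\mathcal{Z}_n := \mathcal{f}^n(\mathcal{Z})$ sit in the filled Julia set $K^+_\lambda$. Either the set $\{\mathcal{Z}_n : n \geq 0\}$ is finite---in which case pigeonhole gives $\mathcal{Z}_n = \mathcal{Z}_m$ for some $0 \leq n < m$, hence $\mathcal{f}^{m-n}(\mathcal{Z}) = \mathcal{Z}$ and (3) holds---or it is infinite. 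In the latter case, by Bishop's theorem, after extraction, $\mathcal{Z}_{n_j} \to \mathcal{Z}_\infty$ to a limit algebraic curve whose fibers also lie in $K^+_\lambda$; I will argue for a contradiction.

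The contradiction is obtained via a similarity argument in the spirit of \cite{GV_Northcott}. I pick $\lambda_0 \in \Lambda$ and, using the Zariski density of saddle periodic points in $\C^2 \times \Lambda$ noted in Section~\ref{sec:henon}, a saddle periodic point $p_0 = f^k_{\lambda_0}(p_0)$ with multipliers $|\alpha_0| < 1 < |\beta_0|$. The implicit function theorem extends $p_0$ to a holomorphic family $p(\lambda)$ of saddle periodic points on a neighborhood $U$ of $\lambda_0$, with holomorphically varying local stable and unstable manifolds and Koenigs-type linearizing coordinates along each, locally conjugating $f^k_\lambda$ to $(u,v) \mapsto (\alpha(\lambda)u, \beta(\lambda)v)$. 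Non-isotriviality of $\mathcal{f}$ forces some conformal modulus of the dynamics at $p(\lambda)$ (e.g.\ $\beta(\lambda)$) to vary non-constantly for a suitable choice of $\lambda_0$. Transporting $\mathcal{Z}_\infty$ and enough of its approximants $\mathcal{Z}_{n_j}$ into the linearizing chart and using that their fibers lie in $K^+_\lambda$, I derive a functional rigidity which forces the marked point to coincide with an iterate of $p(\lambda)$, contradicting the infinite-orbit assumption unless $\mathcal{Z}$ was already preperiodic.

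The main obstacle is this final similarity step, which is more subtle than in the polarized endomorphism case of \cite{GV_Northcott}: saddle dynamics is only partially expanding, so the rigidity must be extracted along the unstable lamination while keeping the stable directions under control. Moreover, one must ensure that the bound on $\deg_\mathcal{M}(\mathcal{f}^n(\mathcal{Z}))$ translates into honest control of the fibers in $\C^2$ even when they approach the indeterminacy point $I^+$ at boundary parameters; this is precisely what the degeneracy estimate of Lemma~\ref{goodgrowth2} and the DSH cutoff machinery introduced in the proof of Proposition~\ref{Stable=bounded-henon} were designed to handle.
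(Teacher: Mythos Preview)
Your reductions are fine: $(3)\Rightarrow(1),(2)$ is immediate and the symmetry reducing $(2)\Rightarrow(3)$ to $(1)\Rightarrow(3)$ is valid. The gap is the similarity step, which as sketched does not deliver what you claim. With only forward stability you correctly get $G^+_\lambda(\mathcal{Z}(\lambda))\equiv 0$, but near a saddle $p(\lambda)$ this only forces the fiber onto the \emph{local stable manifold}: in linearizing coordinates $(u,v)\mapsto(\alpha(\lambda)u,\beta(\lambda)v)$, normality of $\lambda\mapsto f_\lambda^n(V_\lambda)$ with $V_{\lambda_0}=p(\lambda_0)$ kills the unstable coordinate $v(\lambda)$ but places no constraint on $u(\lambda)$. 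So the asserted conclusion that ``the marked point coincides with an iterate of $p(\lambda)$'' does not follow, and variation of $\beta(\lambda)$ alone gives no rigidity once the fiber is free to move along the stable curve. You also never explain why $\mathcal{Z}_\infty$ or its approximants land near a chosen saddle; nothing in $K^+_\lambda$ forces this.

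The paper avoids this by a different decomposition. It first proves the weaker $(1)\wedge(2)\Rightarrow(3)$: with two-sided stability the set $\mathcal{W}_B=\{\mathcal{V}:\deg_\mathcal{M}(\mathcal{f}^n(\mathcal{V}))\leq B\ \text{for all }n\in\Z\}$ is $\mathcal{f}^{\pm1}$-invariant, and if infinite its universal family $\widehat{\mathcal{W}}_1$ projects dominantly onto $\C^2\times\Lambda$. Saddle density is then used as follows (Lemma~\ref{dim=3}): for any saddle $((x_0,y_0),\lambda_0)$ some $\mathcal{V}\in\mathcal{W}_B$ passes through it, forward normality pins $V_\lambda$ to the stable manifold and \emph{backward} normality pins it to the unstable one, so $V_\lambda=p(\lambda)$ exactly. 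This makes $\Pi:\widehat{\mathcal{W}}_1\to\C^2\times\Lambda$ an isomorphism, yielding conjugacies $\phi_\lambda=\Pi_\lambda\circ\Pi_{\lambda_0}^{-1}$; a level-set argument for $G^\pm$ then upgrades $\phi_\lambda$ to an automorphism of $\p^2$, contradicting non-isotriviality.

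The implication $(1)\Rightarrow(2)$ is handled separately by a height-decay trick that is the missing idea in your direct attack. One forms the forward-only family $\mathcal{W}^+$; if it is infinite, its universal family is again dominant, so one finds $\mathcal{Z}'\in\mathcal{W}^+$ through a saddle at some $\lambda_0$. Then $\widehat h_f^+(z')=0$ while $\widehat h_f^-(f^n(z'))=d^{-n}\widehat h_f^-(z')\to 0$, so any Bishop limit $\mathcal{Z}''$ of $(\mathcal{f}^n(\mathcal{Z}'))_n$ satisfies $\widehat h_f(z'')=0$ and hence lies in the \emph{finite} set produced by the first step. Since saddle points are infinite this is a contradiction. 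Your outline would need both the two-sided pinning at saddles and this pushforward-to-zero-height step to close.
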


\begin{proof}
Assume first that both pairs $(\Lambda,\mathcal{f},\mathcal{Z})$ and $(\Lambda,\mathcal{f}^{-1},\mathcal{Z})$ are stable and let us show that $\mathcal{Z}$ is periodic. Assume $\mathcal{Z}$ is not periodic. As $\mathcal{f}$ is a family of regular polynomial automorphisms, $\mathcal{Z}$ is not periodic and $\deg_\mathcal{M}(\mathcal{f}^{n}(\mathcal{Z}))\leq B$ for all $n\in \Z$ for a given constant $B\geq1$, by Proposition~\ref{prop:global-stable} applied to $\mathcal{f}$ and $\mathcal{f}^{-1}$. Let $\mathcal{Z}_B$ be the set of algebraic curves $\mathcal{V}\subset \p^2\times\mathcal{B}$ with $\deg_\mathcal{M}(\mathcal{V})\leq B$ and $\mathcal{V}\cap(\p^2\times\Lambda)\subset \C^2\times\Lambda$. We let
\[\mathcal{W}_B:=\{\mathcal{V}\in \mathcal{Z}_B\, : \ \deg_\mathcal{M}(\mathcal{f}^{n}(\mathcal{V}))\leq B, \ \text{for all} \ n\in \Z \}.\]
The set $\mathcal{W}_B$ is an algebraic subvariety of $\mathcal{Z}_B$ which is stable under iteration of $\mathcal{f}$ and $\mathcal{f}^{-1}$ and which contains the grand orbit $\mathcal{O}(\mathcal{Z}):=\{\mathcal{f}^n(\mathcal{Z})\, : \ n\in \Z \}$ of $\mathcal{Z}$. Define
\[\widehat{\mathcal{W}}:=\{(\mathcal{V},(x,y),\lambda)\in \mathcal{W}\times \C^2\times\Lambda\, : \ ((x,y),\lambda)\in \mathcal{V}\}\]
and let $\pi_{\mathcal{W}}:\widehat{\mathcal{W}}\to\mathcal{X}_\Lambda$ be the projection onto the second factor. By assumption, any irreducible component of $\widehat{\mathcal{W}}$ containing $\mathcal{f}^n(\mathcal{Z})$ for some $n\in\Z$ has dimension at least $2$ (otherwise, $\mathcal{W}_B$ is finite and $\mathcal{Z}$ is periodic). Assume first $\widehat{\mathcal{W}}$ has an irreducible component of dimension $2$. Then  it must have a periodic component of dimension $2$, and for all $\lambda$ outside a finite subset of $\Lambda$, there exists an algebraic curve $C_\lambda$ of $\C^2$ such that $f_\lambda^m(C_\lambda)\subset C_\lambda$ for some integer $m$. This is a contradiction, since regular polynomial automorphisms have no invariant algebraic curve.
This implies any periodic irreducible component of $\widehat{\mathcal{W}}$ has dimension at least $3$ and the orbit $\mathcal{O}(\mathcal{C})$ of $\mathcal{C}$ is Zariski dense in $\p^2\times\mathcal{B}$. Let $\widehat{\mathcal{W}}_1$ be such a component. Up to replacing $f$ by an iterate, we can assume $\mathcal{f}(\widehat{\mathcal{W}}_1)=\widehat{\mathcal{W}}_1$.

\begin{lemma}\label{dim=3}
The canonical projection $\Pi:\widehat{\mathcal{W}}_1\to\C^2\times\Lambda$ is an isomorphism and, for any $\lambda\in\Lambda$, the restriction $\Pi_\lambda:\Pi^{-1}(\C^2\times \{\lambda\})\to\C^2$ of $\Pi$ is an isomorphism.
\end{lemma}

We take Lemma \ref{dim=3} for granted for now and finish the proof. Fix $\lambda_0\in\Lambda$. For any $\lambda\in\Lambda$, we thus can define a birational map $\phi_\lambda\in\mathrm{Bir}(\p^2)$ by letting
\[\phi_\lambda=\Pi_\lambda\circ\Pi_{\lambda_0}^{-1}.\]
By construction, for any $\lambda\in\Lambda$, the indeterminacy points of $\phi_\lambda$ are contained in the line at infinity, we have $f_\lambda=\phi_\lambda\circ f_{\lambda_0}\circ\phi_\lambda^{-1}$ and, for any $\mathcal{V}\in \mathcal{W}_1$ and any $\lambda\in \Lambda$, we have
\[\phi_\lambda(V_{\lambda_0})=V_\lambda.\]
We now rely on Proposition \ref{prop:global-stable}.
By construction of the map $\phi_\lambda$, this implies
\[\phi_{\lambda}(G_{\lambda_0}^\pm= \alpha)= 
(G_{\lambda}^\pm= \alpha),\]
for any $\alpha\geq0$.
Assume, by contradiction, that the map $\phi_{\lambda}$ does not extend as a holomorphic map on $\p^2$, then it contracts the line at infinity $L_\infty$. In particular, for a point $(x,y)\in L_\infty\setminus I(\phi_{\lambda})$, we have $\phi_\lambda(x,y)\in I(\phi_{\lambda}^{-1})$. In other word, a small neighborhood $U_+$ of $(x,y)$ should be sent into a small neighborhood $U_-$ of an indeterminacy point of $\phi_{\lambda}^{-1}$. This is impossible, since any $(x',y')\in U_+$ satisfies $G_{\lambda_0}^+(x',y')=\alpha^+$ and $G_{\lambda_0}^+(x',y')=\alpha^-$ for some $\alpha^\pm>0$ very large. Then, $G_{\lambda}^\pm(\phi_{\lambda}(x',y'))=\alpha^\pm$ and this is not close to  the indeterminacy point of $\phi_{\lambda}^{-1}$.
This is a contradiction so $\phi_\lambda$ is in fact a biholomorphism and the family is isotrivial. We have proved that if $(\Lambda,\mathcal{f},\mathcal{Z})$ and $(\Lambda,\mathcal{f}^{-1},\mathcal{Z})$ are stable, then $\mathcal{f}^n(\mathcal{Z})=\mathcal{Z}$ for some $n\in\mathbb{Z}^*$.
 
~

To conclude the proof, it is sufficient to show that if the pair $(\Lambda,\mathcal{f},\mathcal{Z})$ is stable, then the pair $(\Lambda,\mathcal{f}^{-1},\mathcal{Z})$ is also stable. By assumption, there exists $M>0$ such that $\deg_\mathcal{M} \mathcal{f}^n(\mathcal{Z})\leq M$ for all $n$. Consider the variety $\mathcal{W}^+:=\{V\in \mathcal{C}_M\, : \ \deg(\mathcal{f}^{n}(V))\leq M, \ \text{for all} \ n \geq 0 \}$.
The result follows if we show that $\mathcal{W}^+$ is a finite set. Assume to the contrary that $\dim\mathcal{W}^+>0$. As before, we may replace $\mathcal{W}^+$ by some of its irreducible components and $\mathcal{f}$ by $\mathcal{f}^n$ to have $\mathcal{f}(\mathcal{W}^+)=\mathcal{W}^+$. Define 
\[\widetilde{\mathcal{W}^+}:=\{(\mathcal{V},(x,y),\lambda)\in \mathcal{W}^+\times  \C^{2}\times \Lambda \, : \ ((x,y),\lambda)\in \mathcal{V}\}\]
and let $\pi_{\mathcal{W}^+}:\widetilde{\mathcal{W}^+}\to\C^{2}\times\Lambda$ be the canonical projection. Then either $\dim \widetilde{\mathcal{W}^+}=2$ or $\dim \widetilde{\mathcal{W}^+}\geq3$. Again, if $\dim\widetilde{\mathcal{W}^+}=2$,  for all $\lambda$ outside a finite subset of $\Lambda$, there exists an algebraic curve $C_\lambda$ of $\C^2$ such that $f_\lambda(C_\lambda)\subset C_\lambda$, which is impossible. So we can assume that $\dim\widetilde{\mathcal{W}^+}\geq 3$. In particular, the map $\pi_{\mathcal{W}^+}:\widetilde{\mathcal{W}^+}\to\C^{2}\times\Lambda$ is dominant. Let $((x_0,y_0),\lambda_0)\in \pi_{\mathcal{W}^+}(\widetilde{\mathcal{W}^+})$ be such that $(x_0,y_0)$ is a saddle periodic point of $f_{\lambda_0}$ and let $\mathcal{Z}'\in\mathcal{W}^+$ be such that $((x_0,y_0),\lambda_0)\in\mathcal{Z}'$ and let $z'$ be the restriction of $\mathcal{Z}'$ to the generic fiber of $\pi:\p^2\times\Lambda\to\Lambda$. We have $\widehat{h}_f^+(z')=0$, whence
\[\widehat{h}_f(f^n(z'))=\widehat{h}_f^-(f^n(z'))=d^{-n}\widehat{h}_f^-(z')\longrightarrow_{n\to+\infty}0.\]
As the sequence $(\mathcal{f}^n(\mathcal{Z}'))_n$ has bounded global volume, by Bishop's Theorem, there exists a subsequence $(\mathcal{f}^{n_k}(\mathcal{Z}'))_k$ which converges to a curve $\mathcal{Z}''$ with $\deg_\mathcal{M}(\mathcal{Z}'')\leq M$. Moreover, if $z''$ is the generic fiber of the curve $\mathcal{Z}''$, the above implies
\[\widehat{h}_f(z'')=0.\]
By the previous step of the proof, $\mathcal{Z}''$ belongs to the finite set $\mathcal{W}$. 
This is a contradiction, since there are infinitely many distinct saddle periodic points.
\end{proof}

\begin{remark}\normalfont
\begin{enumerate}
\item  This proof shares many similarities with the proof of the Northcott property over number field of \cite[Theorem 4.2]{Kawaguchi-Henon}. Indeed, in both cases, we first establish a finiteness property for the total height $\widehat{h}_f$, then when $z$ only satisfies $\widehat{h}^+_f(z)=0$, we push it forward to produce point of small height $\widehat{h}_f$ and we can conclude using this finiteness property for $\widehat{h}_f$.
 \item In fact, we have proved that for a given $B>0$, the set of points $z\in\mathbb{A}^2(\mathbf{K})$ with $\deg_\mathcal{M}(\mathcal{Z}_n)\leq B$ for all $n\in\mathbb{Z}$ is finite (here $\mathcal{Z}_n$ is the Zariski closure of $f^n(z)$ in $\mathbb{P}^2\times\mathcal{B}$).
\end{enumerate}
\end{remark}
We now conclude by the proof of Lemma \ref{dim=3}.

\begin{proof}[Proof of Lemma~\ref{dim=3}]
By construction, the map $\Pi$ is dominant. First, we use a similarity argument. Pick $((x_0,y_0),\lambda_0)$ such that $(x_0,y_0)$ is a saddle periodic point of $f_{\lambda_0}$ of period $q\geq1$ and let $\mathcal{V}\in\mathcal{W}$ with $((x_0,y_0),\lambda_0) \in \mathcal{V}$. Let $(\lambda, V_\lambda)$ denote a local parametrization of $\mathcal{V}$. Let also $p(\lambda)$ be the continuation of $(x_0,y_0)$ as a saddle periodic point of $f_\lambda$, for $\lambda\in U$.

Up to replacing $\mathcal{f}$ with $\mathcal{f}^q$ we can assume $q=1$ for this part of the proof. Take $\lambda$ in a small neighborhood $U$ of $\lambda_0$. We know that $V_\lambda$ belong to a small neighborhood of $p(\lambda)$ where the stable and unstable manifold of $p(\lambda)$ intersects transversely. Assume, by symmetry, that $V_\lambda$ does not belong to the stable manifold of $p(\lambda)$. Considering $f^n_\lambda(V_\lambda)$ for $n\gg 1$, we then deduce that the family $\lambda \mapsto (\lambda, f^n_\lambda(V_\lambda))$ is not normal, a contradiction. In particular, $V_\lambda$ belong to both the stable and unstable manifold of $p(\lambda)$ so it equal to $p(\lambda)$ in a neighborhood of $\lambda_0$:
$\mathcal{V}\cap U=\{(p(\lambda),\lambda)\, : \ \lambda\in U\}$.

~

As the set of points of the form $((x_0,y_0), \lambda_0)$ such that $(x_0,y_0)$ is a saddle periodic point of $f_{\lambda_0}$ is Zariski dense in $\C^2\times\Lambda$, the map $\Pi$ is a birational morphism whose image is a Zariski open subset of $\C^2\times\Lambda$. Let $H$ be its complement. Up to removing a finite set from $\Lambda$, the set $H\cap  (\C^2  \times \{\lambda\})$ is Zariski closed. Moreover, it is both $f_\lambda$ and $f_\lambda^{-1}$ invariant. Since $f_\lambda$ has no invariant curve, this forces $H\cap  (\C^2  \times \{\lambda\})$ to be finite so this is a finite union of periodic orbit. Take any sequence $\mathcal{V}_n$ such that, for a given $\lambda$, the set $\mathcal{V}_n\cap (\C^2  \times \{\lambda\})$ accumulates on $H\cap (\C^2  \times \{\lambda\})$. By Bishop theorem, up to extraction, $(\mathcal{V}_n)_n$ converges towards $\mathcal{V}_\infty\subset H$ with $\deg(\mathcal{V}_\infty)\leq D$. This implies $H$ is a finite union of curves $\mathcal{V}\in \mathcal{W}$. This contradicts the definition of $H$, so that $\Pi$ is surjective.

We now need to prove it is finite. Since $\C^2\times\Lambda$ is normal, this would end the proof that it is an isomorphism. As in the case of endomorphisms, if non-empty, the set
$\{((x,y),\lambda)\, : \ \mathrm{Card}(\Pi^{-1}\{(x,y),\lambda\})=+\infty\}$ is totally invariant by $\mathcal{f}$, whence consists of periodic points. As they are isolated, this is impossible and $\Pi$ is finite.

\medskip

Fix now $\lambda_0\in\Lambda$. Note that, since $\mathcal{C}$ is a graph, it is irreducible and we have
\[\left(\mathcal{C}\cdot (\C^2  \times \{\lambda\})\right)=1\]
for all $\lambda\in\Lambda$, where $\C^2  \times \{\lambda\}$ is a general fiber of $\pi$. As all the fibers of $\pi$ (resp. all varieties in $\mathcal{W}$) are cohomologous and as the intersection can be computed in cohomology, we deduce that
\[\left(\mathcal{V}\cdot (\C^2  \times \{\lambda\})\right)=1\]
for all $\lambda\in\Lambda$ and all $\mathcal{V}\in\mathcal{W}$. In particular, $\Pi_\lambda:\Pi^{-1}(\C^2  \times \{\lambda\})\to\C^2$ is a surjective morphism whose topological degree is exactly $\left(\mathcal{V}\cdot (\C^2  \times \{\lambda\})\right)$ for a general variety $\mathcal{V}\in\mathcal{W}$, the map $\Pi_\lambda$ is a surjective finite birational morphism. By normality of $\C^2$, it is an isomorphism.
\end{proof}

\medskip

We are now in position to prove our main result:

\begin{proof}[Proof of the Main Theorem]
We first prove item $(1)$. The equivalence between the four points in question is an immediate consequence of Proposition~\ref{prop:global-stable}, Proposition~\ref{lm:height-henon} and Theorem~\ref{tm:henon}.

The finiteness of periodic points is an immediate consequence of Proposition~\ref{lm:height-henon}.
\end{proof}

\subsection{On a conjecture of Kawaguchi and Silverman for regular automorphisms}

As an application, we prove  the conjecture of Kawaguchi and Silverman \cite[Conjecture 6]{Kawaguchi-Silverman-arithmetic-degree}  in the case of regular polynomial automorphisms defined over a function field of characteristic zero (due to Kawaguchi and Silverman over number fields).

Recall that the first \emph{dynamical degree} $\lambda_1(f)$ of a regular polynomial automorphism $f$ is
\[\lambda_1(f):=\lim_{n\to+\infty}\left((f^n)^*H\cdot H\right)^{1/n},\]
where $H$ is any big and nef Cartier divisor on $\mathbb{A}^2$, and that the \emph{arithmetic degree} of a point $P\in \mathbb{A}^2(\bar{\mathbf{K}})$ is  
\[\alpha_f(P):=\lim_{n\to+\infty}\max\left(h(f^n(P)),1\right)^{1/n},\]
where $h=h_{\p^2,\mathcal{O}(1)}$ as above, when the limit exists. Actually, by Proposition~\ref{Stable=bounded-henon}, we have that for any point $P\in \mathbb{A}^2(\bar{\mathbf{K}})$, $\alpha_f(P)$ exists and is equal to $1$ or $d =\lambda_1(f)$. In particular, points 1, 2 and 3 of the conjecture hold.

Now, as an immediate consequence of Theorem~\ref{tm:henon} and Proposition~\ref{Stable=bounded-henon}, either $P$ has finite orbit, or $\widehat{h}_f(P)>0$ and $\alpha_f(P)=d$. This actually strengthens point 4 of the conjecture, where this is supposed to hold for points of Zariski dense orbit.

We thus have proven the following

\begin{corollary}
Let $k$ be a field of characteristic zero and $\mathcal{B}$ be a smooth projective $k$-curve. Let $f$ be a non-isotrivial regular polynomial automorphism over $\mathbf{K}:=k(\mathcal{B})$ of degree $d$.
\begin{enumerate}
\item  For any $P\in \mathbb{A}^2(\bar{\mathbf{K}})$, the limit $\alpha_f(P)$ exists and is an integer,
\item the set $\{\alpha_f(Q)\, : \ Q\in X(\bar{\mathbf{K}})\}$ coincides with $\{1,d\}=\{1,\lambda_1(f)\}$,
\item if $P\in \mathbb{A}^2(\bar{\mathbf{K}})$ has infinite forward (or backward) orbit, we have $\alpha_f(P)=\lambda_1(f)$.
\end{enumerate}
\end{corollary}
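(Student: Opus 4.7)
The plan is to combine the quantitative mass estimate of Proposition~\ref{Stable=bounded-henon} with the equivalences established in the Main Theorem. As a preliminary step, I would recall that for any regular polynomial automorphism of $\mathbb{A}^2$ of algebraic degree $d$ the first dynamical degree equals $\lambda_1(f)=d$; this follows from $f^*H_\infty=d\cdot H_\infty$ in $\mathrm{Pic}(\mathbb{P}^2)$ together with the desingularizing model $V$ of diagram~\eqref{eq:blowup}, which allows one to compute $((f^n)^*H\cdot H)$ directly.

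Given $P\in\mathbb{A}^2(\bar{\mathbf{K}})$, I would first pass to a finite branched cover of $\mathcal{B}$ so that $P$ descends to an $\mathbf{K}'$-point (non-isotriviality is preserved, since that notion is already formulated over $\bar{\mathbf{K}}$), and then reduce to $k=\mathbb{C}$ by the argument of the introduction, so that $P$ corresponds to a marked point of $\mathcal{B}$ and its closure $\mathcal{Z}\subset\mathbb{P}^2\times\mathcal{B}$ forms an algebraic dynamical pair of regular polynomial automorphism-type. Writing $\mathcal{Z}_n$ for the Zariski closure of $f^n(P)$ and using the identity $h(f^n(P))=\deg_\mathcal{M}(\mathcal{Z}_n)-1$ recorded in the proof of Proposition~\ref{lm:height-henon}, Proposition~\ref{Stable=bounded-henon} applied to $\mathcal{f}^n(\mathcal{Z})$ gives
\[ h(f^n(P)) \;=\; d^n\,\widehat{h}_f^+(P) \;+\; O(1). \]
Consequently $\alpha_f(P)=d$ when $\widehat{h}_f^+(P)>0$, while $h(f^n(P))$ is bounded and $\alpha_f(P)=1$ when $\widehat{h}_f^+(P)=0$. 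This simultaneously yields the existence and integrality asserted in~(1) and the value set $\{1,d\}=\{1,\lambda_1(f)\}$ of~(2).

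For item~(3), if $P$ has infinite forward orbit then $P$ is not $f$-periodic, so the Main Theorem forces $\widehat{h}_f^+(P)>0$ and the previous paragraph delivers $\alpha_f(P)=d=\lambda_1(f)$. If instead $P$ has infinite backward orbit, I would observe that bijectivity of $f$ makes preperiodicity equivalent to periodicity: a finite forward orbit would force $f^N(P)=f^{N+k}(P)$ for some $N\geq 0$, $k\geq 1$, hence $P=f^k(P)$ by injectivity, contradicting the infinitude of the backward orbit; the forward orbit is therefore again infinite and the same conclusion applies.

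I do not anticipate any substantive obstacle: the linear growth formula for $h(f^n(P))$ and the equivalence between vanishing of $\widehat{h}_f^+$ and periodicity are already in hand, so the corollary reduces to assembling these two inputs together with the elementary observation that periodicity is two-sided for bijective maps.
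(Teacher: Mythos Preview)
Your proposal is correct and follows the same approach as the paper: both extract the dichotomy $\alpha_f(P)\in\{1,d\}$ from the mass estimate of Proposition~\ref{Stable=bounded-henon} (equivalently, the growth formula $h(f^n(P))=d^n\widehat{h}_f^+(P)+O(1)$) and then invoke the Main Theorem/Theorem~\ref{tm:henon} to identify the case $\alpha_f(P)=1$ with periodicity. Your write-up is simply more explicit than the paper's two-sentence argument, including the reduction to $k=\mathbb{C}$ and the observation that for a bijective $f$ an infinite backward orbit forces an infinite forward orbit.
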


\end{document}